\newtheorem{theorem}{Theorem}[section]
\newtheorem{prop}[theorem]{Proposition}
\newtheorem{lemma}[theorem]{Lemma}
\newtheorem{cor}[theorem]{Corollary}
\newtheorem{defn}[theorem]{Definition}
\theoremstyle{definition}
\newtheorem{remark}[theorem]{Remark}
\newcounter{tenumerate}
\def \a {\alpha}
\def \b {\beta}
\def \BD {{\cal BD}}
\def \cB {{B}}
\def \e {\varepsilon}
\def \d {\delta}
\def \E {{\mathbb{E}}}
\def \ff {{\cal F}}
\def \k {\kappa}
\def \l {\lambda}
\def \P {{\mathbb{P}}}
\def \R {{\mathbb{R}}}
\def \T {{\mathbb{T}}}
\def \Var {{\rm Var}}
\def \s {\sigma}
\def \ss {{\cal S}}
\def \ee {{\cal E}}
\def \vff {{\cal VF}}
\def \Z {{\mathbb{Z}}}
\def \( {\left( }
\def\) {\right) }
\def\[ {\left[}
\def\]{\right]}
\begin{document}

\title{{\bf On the Liouville heat kernel for $k$-coarse MBRW and nonuniversality}}

\author{ Jian Ding\thanks{Partially supported by an NSF grant DMS-1455049, an Alfred Sloan fellowship, and NSF of China 11628101.}  \\ University of Chicago \and Ofer Zeitouni\thanks{Partially supported by the ERC advanced grant LogCorrelatedFields and by the
Herman P. Taubman chair at the Weizmann Institute.} \\
Weizmann Institute \\ Courant Institute \and Fuxi Zhang\thanks{Supported by NSF of China 11371040.} \\ Peking University
}

\date{}

\maketitle
\begin{abstract}
  We study the Liouville heat kernel (in the $L^2$ phase) associated with
  a class of logarithmically correlated Gaussian fields on the
  two dimensional torus. We show that for each $\e>0$ there exists such a
  field,
  whose covariance is a bounded perturbation of that of the two dimensional
  Gaussian free field, and such that the associated Liouville
  heat kernel satisfies the short time estimates,
  $$\exp \( - t^{ - \frac 1 { 1 + \frac 1 2 \gamma^2 }  - \e  } \)
\le p_t^\gamma (x, y)  \le
\exp \( - t^{-  \frac 1 { 1 + \frac 1 2 \gamma^2 }  + \e } \) ,$$
for $\gamma<1/2$.
In particular, these are different from predictions, due to Watabiki,
concerning the
Liouville heat kernel for the two dimensional Gaussian free field.
\end{abstract}

\section{Introduction}



In recent years, there has been much interest and progress in the understanding
of two dimensional
\textit{Liouville quantum gravity}, and associated processes. We do not
provide an extensive bibliography and refer instead to the original articles
and surveys
\cite{DS11, DMS,  Ber} for background. The starting point for this study is
the construction of Liouville measure, which is the exponential
of the Gaussian free field and is constructed rigorously using Kahane's
theory
of Gaussian multiplicative chaos \cite{RV14}.

One aspect that has received attention is the construction of Liouville
Brownian motion using the Liouville measure and the theory of Dirichlet forms.
Mathematically, this has been achieved in \cite{GRV13}
(see also \cite{Ber15}), and properties
of the associated Liouville heat kernel have been discussed in \cite{GRV14,
MRVZ14, AK16}.
One important motivation behind the study of the Liouville heat kernel is that it
can be used to study the geometry (and critical exponents) of
Liouville quantum gravity. Indeed,
a particularly nice application of the construction
of the Liouville heat kernel is that it allows for a clean derivation
of the so-called KPZ relations \cite{BGRV16}.
Another  important motivation, discussed
in \cite{MRVZ14}, are the predictions of Watabiki \cite{Wa} concerning
the short time behavior of the Liouville heat kernel. See the
discussion in \cite{MRVZ14, AK16} for existing (weak)
estimates on the diffusivity exponents of the Liouville heat kernel.

An important aspect of the class of logarithmically correlated Gaussian
fields (of which the 2D Gaussian free field is arguably the prominent example)
is the universality of many quantitites,
e.g. Hausdorff dimensions, statistics of the maximum, etc.,
see \cite{RV14,DRZ15}. One could naively expect that for Gaussian fields
in this class, the  predicted exponents of the Liouville heat kernel
would be universal.

Our goal in this paper is to show that this is not the case, in the sense that
the explicit predictions on Liouville heat-kernel exponents (appearing in
\cite{Wa} and discussed in \cite{MRVZ14, AK16}) do not hold for
some two dimensional logarithmically
correlated Gaussian fields which are bounded perturbations of the Gaussian free field.
Namely,
we study in this paper the heat kernel for Liouville Brownian motion
constructed with respect to a particular logarithmically
correlated field, introduced in \cite{DZ15}
under the name \textit{$k$-coarse modified branching random walk} (MBRW for short). Given $k>0$ integer,
this is the centered Gaussian field
on the torus $\T = \R^2/(4\Z)^2$, denoted $h=\{h (x) \}_{x\in \T}$,
  with covariance
 $$
G(x,y) = k \log 2 \sum_{j=0}^\infty A (x,y; 2^{-kj} ),
 $$
where $A(x, y; R) = | B(x, R) \cap B(y, R)| / |B(x,R)|$,
$B(z,R)$ is the (open) ball centered at $z$ with radius $R$ with respect to the
natural metric on the torus, and $| B |$ is the Lebesgue measure of a set $B$. The particular choice of the
scaling of the torus is not important and only done for convenience.

We will show in Section~\ref{Section.covariance} that for all $k$,
 \begin{equation} \label{Eq.covariance}
G(x,y) = \log \frac 1 { |x-y| } + \l  (|x - y|) ,
 \end{equation}
where $\l$ is continuous in $(0,2]$ and $| \l | \le 6 k$. Fixing $\gamma\in (0,2)$,
we introduce in Section~\ref{Section.LBM}, following \cite{GRV13}, the Liouville measure $\mu^\gamma$,
Liouville Brownian motion (LBM) $\{ Y_t \}$, and Liouville heat kernel
(LHK) $p^\gamma_t(x,y)$, associated with $(\gamma,h)$. Formally,
the Liouville measure on $\T$ is defined as
$\mu^\gamma (dx) : = e^{\gamma h (x) - \frac {1}{2}\gamma^2 \E h^2 (x)} d x$;
one then introduces
the positive continuous additive functional (PCAF) with respect to $\mu^\gamma$
as
$$ F (v) := \int_0^v e^{\gamma h(X_u) - \frac {\gamma^2}2 \E h (X_u)^2} d u, $$
where
$\{ X_t \}$
denotes a standard Brownian motion (SBM) on $\T$.
The LBM is then defined formally as $Y_t:=X_{F^{-1}(t)}$, and
the LHK $p_t^\gamma (x,y)$ is then the density of the Liouville semigroup with respect to $\mu^\gamma$, i.e.
 $$
E^x f(Y_t) = \int p_t^\gamma (x,y) f(y) \mu^\gamma (d y),
 $$
where the superscript $x$ is to recall that $Y_0=X_0=x$.

Let $\P$ denote the Gaussian law of $h$. The
main result of this paper is as follows.
 \begin{theorem} \label{Theorem.mainthm}
Suppose $0 \le \gamma < \frac 1 2$, and $x, y \in \T$ with $x \neq y$.
For any $\e > 0$, there exist $k (\e, x, y)$ and
a random variable $T_0$ depending on $(x,y,\gamma,k,\e,h)$ only so that
for any
$k \ge k (\e, x, y)$ and $t<T_0$,
\begin{equation}
  \label{eq-main1}
\exp \( - t^{ - \frac 1 { 1 + \frac 1 2 \gamma^2 }  - \e  } \)
\le p_t^\gamma (x, y)  \le
\exp \( - t^{-  \frac 1 { 1 + \frac 1 2 \gamma^2 }  + \e } \) \,,\quad \P \mbox{\rm -a.s.}.
 \end{equation}
 \end{theorem}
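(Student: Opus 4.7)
The plan is to reduce the heat-kernel estimate \eqref{eq-main1} to matching two-sided bounds on the Liouville measure of small balls:
\[
r^{2+\gamma^2/2+O(\e)}\;\le\;\mu^\gamma(B(z,r))\;\le\;r^{2+\gamma^2/2-O(\e)},
\]
holding uniformly in $z$ in a relevant region and in $r$ below a random threshold. Once this is in hand, since $Y_t = X_{F^{-1}(t)}$ is a time change of SBM, the LBM exit time $\tau_r(x)$ from $B(x,r)$ equals $F(\sigma_r(x))$ for the SBM exit time $\sigma_r(x)\asymp r^2$, and the ball-mass bound translates into $\tau_r(x)\asymp r^{2+\gamma^2/2\pm O(\e)}$. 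Thus the effective walk dimension is $d_w = 2+\gamma^2/2$, and standard sub-Gaussian heat-kernel estimates (of the sort used for LBM in \cite{MRVZ14, AK16}) convert this into $p_t^\gamma(x,y)\asymp \exp(-t^{-1/(d_w-1)\pm\e})$ with $d_w-1 = 1+\gamma^2/2$, matching \eqref{eq-main1}.

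The ball-mass estimate is where the hierarchical structure of the $k$-coarse MBRW becomes essential. Decomposing $h=\sum_{j\ge 0} h_j$ with $h_j$ the Gaussian layer of covariance $k\log 2\cdot A(\cdot,\cdot;2^{-kj})$, each $h_j$ is essentially constant on balls of radius $2^{-kj}$ and decorrelates on balls of that radius, so for large $k$ successive layers are close to independent. A layer-by-layer Chernoff estimate on $\log\mu^\gamma(B(z,2^{-kj}))$ gives sharp concentration around its mean $-jk\log 2 \cdot(2+\gamma^2/2)$, and a dyadic union bound over locations $z$ and scales $j$ --- convergent thanks to $k$ being large --- promotes this into the required uniform bound. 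This is the technical core of the argument, and it also explains the discrepancy with Watabiki's prediction for the true GFF: there, thick-point contributions inflate the walk dimension, whereas here they are suppressed by the coarse structure.

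Granted the measure estimates, the upper bound $p_t^\gamma(x,y)\le \exp(-t^{-1/(1+\gamma^2/2)+\e})$ follows from a chaining/Davies argument using nested annuli separating $x$ from $y$: LBM cannot reach $y$ by time $t$ unless $t$ exceeds the sum of exit times across these annuli, each $\gtrsim r_j^{2+\gamma^2/2+\e}$. The lower bound uses Chapman--Kolmogorov chaining: setting $r := (t/|x-y|)^{1/(1+\gamma^2/2+\e)}$ and $n := \lceil |x-y|/r \rceil$, place $z_0=x, z_1,\dots,z_n=y$ at spacing $r$ along the segment from $x$ to $y$ and bound
\[
p_t^\gamma(x,y)\;\ge\;\int_{B(z_1,r/2)}\!\!\cdots\!\int_{B(z_{n-1},r/2)} \prod_{i=0}^{n-1} p_{t/n}^\gamma(w_i,w_{i+1})\prod_{i=1}^{n-1}\mu^\gamma(dw_i),
\]
with $w_0:=x$, $w_n:=y$. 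Since $t/n\asymp r^{2+\gamma^2/2}$ matches the exit time from $B(z_i,r)$, a near-diagonal lower bound $p_{t/n}^\gamma(w_i,w_{i+1})\gtrsim 1/\mu^\gamma(B(z_{i+1},2r))$ (valid for $w_{i+1}$ in that ball) makes each sub-integral a positive constant, so $\log p_t^\gamma(x,y)\gtrsim -n \asymp -t^{-1/(1+\gamma^2/2)-O(\e)}$, absorbing a sub-leading $\log(1/r)$ contribution from the final factor. The principal obstacle throughout is the \emph{uniform} lower measure bound over a continuum of locations along a potential LBM path: this is what forces $k$ to be large enough for the spatial union bound to be summable, which is precisely why $k(\e,x,y)$ in the statement must grow as $\e\to 0$.
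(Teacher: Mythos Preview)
Your reduction rests on a uniform two-sided ball-mass estimate
\[
r^{2+\gamma^2/2+O(\e)}\le \mu^\gamma(B(z,r))\le r^{2+\gamma^2/2-O(\e)}
\]
valid for all $z$ in the relevant region, and this is false for any log-correlated field, including the $k$-coarse MBRW for every $k$. Writing $s=2^{-kr}$ and decomposing $h=\varphi_r+\psi_r$ as in the paper, one has roughly $\mu^\gamma(B(z,s))\approx e^{\gamma\varphi_r(z)}\, s^{2+\gamma^2/2}\times(\text{fine-field factor})$, and $\varphi_r(z)$ is Gaussian with variance $\log(1/s)$. Hence $\log\mu^\gamma(B(z,s))$ has fluctuations of order $\gamma\sqrt{\log(1/s)}$ at a fixed point and, more to the point, $\max_z\varphi_r(z)\sim 2\log(1/s)$ as for any log-correlated field. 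So there are always points with $\mu^\gamma(B(z,s))\asymp s^{2+\gamma^2/2-2\gamma+o(1)}$ and points with $\mu^\gamma(B(z,s))\asymp s^{2+\gamma^2/2+2\gamma+o(1)}$; no choice of $k$ suppresses this, since the covariance is $\log(1/|x-y|)+O(k)$ and the thick/thin-point structure is governed by the logarithmic part. Your statement that ``thick-point contributions\ldots\ are suppressed by the coarse structure'' is therefore unfounded, and the Chernoff-plus-union-bound you sketch cannot close: the pointwise tail $\P(\gamma\varphi_r(z)\ge a\log(1/s))\asymp s^{a^2/(2\gamma^2)}$ is never small enough to beat the $s^{-2}$ spatial union over centers once $a<2\gamma$.

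The paper does not attempt any such uniform estimate. Instead it works pathwise: for the lower bound it invokes a percolation result (Theorem~1.7 of \cite{DZ15}) to exhibit, with high probability, a chain of $s$-boxes from $x$ to $y$ along which the coarse field is uniformly at most $\d kr\log 2$ \emph{and} each box is ``fast'' for the fine-field PCAF; forcing the SBM through this chain gives the crossing-time and probability bounds. For the upper bound it uses the dual result (Theorem~1.5 of \cite{DZ15}) to show that \emph{every} discretized SBM path from $x$ out of $B(x,|x-y|/4)$ must traverse $\gtrsim s^{-(1-\d)}$ ``slow'' boxes on which the coarse field is at least $-c\d kr\log 2$, which forces $F(\tilde\varsigma)$ to be large. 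The role of large $k$ is not to make a union bound summable but to make these \cite{DZ15} percolation statements hold; this is the missing idea in your proposal.
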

 \begin{remark}
Our result shows that the exponent of the LHK with respect to the $k$-coarse MBRW is for large $k$ and small
$\gamma$, roughly
$(1+o_k(1))/(1+\gamma^2/2)$.
In particular, it does not
match values one could guess from Watabiki's formula, see \cite{Wa,MRVZ14}, based on which one would
predict that for $\gamma$ small, the exponent is
$(1+o(\gamma))/(1+7\gamma^2/4)$.
This is yet another manifestation of the expected non-universality of exponents related to Liouville quantum gravity, across the class of logarithmically correlated Gaussian fields. See \cite{DZ15,DG16} for other examples.
  \end{remark}

\noindent
{\bf Heuristic.} We describe the strategy behind the proof of the lower bound, and the upper bound is similar. First, represent hierarchically the MBRW as follows. Let $h_j$ be independent centered Gaussian fields on $\T$ with covariance
\begin{equation}
  \label{eq-ofer1}
  \E h_j (x) h_j (y) = k \log 2 \times A(x,y; 2^{-kj}) = : g_j (x,y).
\end{equation}
Formally,
$h = \sum_{j=0}^\infty h_j$. For given $t$,
choose $r$ such that $t = 2^{-kr(1 + \frac 1 2 \gamma^2 - o (1))}$,
and decompose the field $h$ into a coarse field $\varphi_r$ and a
fine field $\psi_r$, with
 \begin{equation}
   \label{eq-2}
\varphi_r := \sum_{j=0}^{r-1} h_j , \ \ \ \psi_r := \sum_{j=r}^\infty h_j ,
 \end{equation}
with respective covariances
\begin{equation}
  \label{eq-pg-2}
G^{(1)}_r (x,y) = k \log 2 \sum_{j=0}^{r-1} A(x,y; 2^{-kj}), \  \ G^{(2)}_r (x,y) = k \log 2 \sum_{j=r}^\infty A(x,y; 2^{-kj} ).
\end{equation}
 Note that much like the MBRW, the fine field is not defined pointwise but only
 in the sense of distributions.

With $k,r$ fixed, we partition $\T$ into $2^{2 (k r+2)}$ boxes of side length $s = 2^{- k r} $,
elements of
$$\BD_r=\{[a 2^{-kr}, (a+1)2^{-kr}) \times [b 2^{-kr}, (b+1) 2^{-kr})\}_{
  a, b \in [0 , 2^{kr+2} ) \cap \Z}.$$
    We call the
    elements of $\BD_r$ \textit{$s$-boxes}.
    Similarly to \cite{DZ15},
    we will find a sequence of neighboring $s$-boxes $B_i$, $1 \le i \le I$
    (with $I \le 2^{kr(1 + \d )}$, $\d$ chosen below) connecting $x$ to $y$,
    so that the following properties (of the $B_i$'s) hold.
    The coarse field $\varphi_r$ throughout each $B_i$
    is bounded above by
    $\d kr \log 2$,
    where $\d > 0$ is small and will
    be chosen according to $\e$ in Theorem~\ref{Theorem.mainthm}.
    With probability at least $s^\d$,
    the LBM associated with the fine field $\psi_r$ crosses each $B_i$
    within time
    $s^{2 - \d }$. Forcing the
    original LBM to pass through this sequence of boxes,
    we will then conclude that
    it spends time at most
    $\le 2^{kr(1+ \d )} \times 2^{ \d \gamma k r - \frac 1 2 \gamma^2 k r } s^{2-\d} = 2^{-kr (1 + \frac 1 2 \gamma^2 - (2+\gamma)\d )} = t^{1+O(\e)}$
    crossing
    from $x$ to  the $s$-box containing $y$.
    This happens with probability at least
    $\ge ( s^\d )^{- 2^{kr (1+ \d)} }
    \ge \exp ( - t^{- \frac 1 {1+\frac 1 2 \gamma^2} + \e } ) $, and,
    modulu a localization argument, completes the proof of the lower bound.

\medskip

\noindent {\bf Structure of the paper.}
The preliminaries Section~\ref{Section.construction} is devoted to
the study of the covariance of the $k$-coarse MBRW $h$,
and in particular to verifying
that its covariance is a bounded perturbation of that of the Gaussian free field. We also discuss the power law spectrum of $h$  and the construction of the LBM with its corresponding PCAF.
In addition, Section~\ref{Section.coarsefield} is devoted to a study of
the coarse field $\varphi_r$, and results in estimates on its
fluctuations and maximum in a box.
Section~\ref{Section.finefield} is devoted to a study of the fine field; we introduce the
notions of slow and fast points/boxes and estimate related probabilities.
 (The property of being fast is used in the proof of the
 lower bound, and that of being slow is used in the upper bound.)
Finally, the proof of lower bound is contained
in Section
\ref{Section.lowerbound},
and that of upper bound is contained in Section
\ref{Section.upperbound}. Both these sections borrow crucial arguments from \cite{DZ15}.

\medskip

\noindent {\bf Notation convention}. Throughout the paper, we restrict attention to
$0 \le \gamma < 1/ 2$.  $\T$ is equipped with the natural metric inherited from the Euclidean distance.
We choose $\d>0$ small and $k$ large integer (as functions of $\e$)
and keep them fixed throughout.
We let $C_i$, $i=0,1,\dots$  be universal positive constants, independent of
all other parameters. With $r$ as described above, we let $BD_r (x)$ denote
the unique element of $\BD_r$ containing $x$. For $\ell > 0$, an $\ell$-box means a box of side length $\ell$. Let $B_\ell (x)$ denote the $\ell$-box centered at $x$, and let $B(x, \ell)$ denote
the ball centered at $x$ with radius $\ell$. For any box $B$, let
$c_B$ denote the center of $B$. If $B$ is an $\ell$-box, denote by $B^{*}$ the $(5 \ell)$-box centered at $c_B$.
We use $\P$ and $\E$ to denote the probability and expectation related to
the Gaussian field $h$. Let $P^x$ and $E^x$ be the probability and expectation related to the SBM starting at $x$. We let  $F^x$ and $F_r^x$ be the PCAFs
for the LBM and $\psi_r$-LBM started at $x$, respectively.
When the starting point $x$ needs not be emphasized,
we drop the superscript $x$.

\section{Preliminaries} \label{Section.construction}

Subsection~\ref{Section.covariance} is devoted to
the proof of \eqref{Eq.covariance}. In Subsection~\ref{Section.coarsefield},
we study  the coarse field $\varphi_r$ and bound its maximum on small boxes
as well as the fluctuation across such boxes.
Subsection~\ref{Section.LBM} is devoted to a quick review of the
construction and existence of the LBM and the LHK.

\subsection{Proof of \eqref{Eq.covariance}} \label{Section.covariance}
Let $d$ denote the $\T$ distance between $x,y$, and fix
$r_0 := r_0 (d) \ge 0$ integer
so that
 $$ 2^{- k (r_0 + 1)} < \frac d 2 \le 2^{- k r_0 } .
 $$
Denote
 $$
\theta_{j , d} : =
\arcsin(2^{kj} d/2), \quad j=0,1,\ldots, r_0. 
 $$

We compute the covariance $g_j (x,y)$, c.f. \eqref{eq-ofer1}. For $j\leq r_0$, note that
$R: = 2^{- k j} \ge \frac d 2$; set $\theta=\theta_{j,d}$. Then $|B(x,R) \cap B(y,R)| = (\pi - 2 \theta) R^2 - 2 R^2 \sin (\theta) \cos (\theta) = \pi R^2 - R^2 (2 \theta + \sin (2 \theta))$, which implies that $A(x,y; R) = 1 - \frac 1  {\pi} (2 \theta + \sin (2 \theta) )$. It follows that with $j\in \Z_+$,
 \begin{equation} \label{Eq.COVcovhj}
g_j (x,y) = \left\{ \begin{array}{ll}  k \log 2 - \frac {k \log 2} \pi
  \big( 2 \theta_{j , d} +  \sin (2 \theta_{j , d} ) \big),
  & \mbox{if } j \le r_0,  \\ 0, & \mbox{otherwise.} \end{array} \right.
 \end{equation}
We now write
\begin{equation}
  \label{eq-ofer2}
  G(x,y)=\sum_{j=0}^\infty g_j(x,y)=\sum_{j=0}^{r_0} g_j(x,y)=
k\log 2\left( (r_0+1)-\frac1\pi \sum_{j=0}^{r_0} \big( 2\theta_{j,d}+
\sin(2\theta_{j,d}) \big)\right)\,.
\end{equation}
Since $r_0=r_0(d)$, we obtain that $G(x,y)=g(d)$ for some function $g:(0,2]
\to \R_+$. We now show that $g$ is continuous. Indeed, note that for any fixed
$j$,   $d\mapsto  \theta_{j,d}$ is continuous (in $d \in [0, 2^{1-kj}]$). Thus the only possible discontinuities of $g$ on $(0,2]$ are whenever $-\log_2(d/2)/k$ is an integer ({\em i.e.} equals $r_0(d)$); however, for such
$d$ we obtain that $\theta_{r_0(d),d}=\pi/2$, which together with the continuity of $d\mapsto \theta_{j,d}$, yields the continuity of $g$.

 To estimate $g(d)$, note that
for all $\theta \in [0, \frac \pi 2]$,
$0 \le \sin (2 \theta) \le 2\sin(\theta)$ and $\theta\leq 2\sin(\theta)$,
 and therefore
 \begin{equation} \label{Eq.COVtheta}
0 \le 2 \theta + \sin (2 \theta) \le  6\sin(\theta).
 \end{equation}
 In particular,
 $$
\frac1\pi |\sum_{j=0}^{r_0}(2\theta_{j,d}+ \sin(2\theta_{j,d}))| \leq \frac6\pi \sum_{j=0}^{r_0} 2^{ - k (r_0 - j)} \leq \frac6\pi \sum_{i=0}^\infty 2^{-ki} \leq \frac {12} \pi \le 4.
 $$
On the other hand, $|k(r_0+1)\log 2 + \log d|\leq (k+1)\log 2\leq 2 k$.
Combining the last two displays with \eqref{eq-ofer2} shows that
 $$
|g(d)+\log d|\leq 6 k \,,
 $$
yielding \eqref{Eq.covariance}.

\subsection{The coarse field}  \label{Section.coarsefield}
Note that $g_j(x,y)$ is a positive definite kernel on $L^2(\T)$,
since, with $R=R_j=2^{-{kj}}$,
$$\hat g_j(x,y)=|B(0,R)|g_j(x,y)=
\int_{\T} dz \ {\bf 1}_{|z - x|\leq R} {\bf 1}_{|z-y|\leq R}$$
and therefore,
for any $f\in L^2(\T)$,
 $$
\int_{(\T)^2} f(x)f(y) \hat g_j(x,y) dx dy = \int_{\T} dz \left(\int_{\T} dx \ f(x) {\bf 1}_{|x-z|\leq R}
\right)^2 \geq 0\,.
 $$
Since $g_j(x,y)$ is Lipshitz continuous, Kolmogorov's criterion implies that
the associated Gaussian field $x\mapsto h_j(x)$ is continuous almost surely (more precisely, there exists a version of the field which is continuous almost surely).
Consequently, the coarse field $\varphi_r$ is also smooth.
In this subsection, we estimate the maximum value as well
as the fluctuations of $\varphi_r$ in a box.

We begin by recalling an easy consequence of Dudley's criterion.
\begin{lemma}\label{Lem.maxinbox}
(\cite[Theorem 4.1]{A90}) Let $B \subset \mathbb{Z}^2$ be a box of side length $\ell$ and $\{ \eta_w : w \in B \}$ be a mean zero Gaussian field satisfying
 $$
\E (\eta_z - \eta_w)^2 \le |z-w|_\infty / \ell \ \ \ {\rm for \ all \ } z,w \in B.
 $$
Then $\E \max_{w \in B} \eta_w \le C_0$, where $C_0$ is a universal constant.
 \end{lemma}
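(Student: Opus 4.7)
The plan is to deduce this as a direct application of Dudley's entropy bound for suprema of Gaussian processes. Introduce the canonical pseudometric $d(z,w) := \sqrt{\E(\eta_z-\eta_w)^2}$ on $B$. The hypothesis rewrites as $d(z,w) \le \sqrt{|z-w|_\infty/\ell}$, and since $|z-w|_\infty \le \ell$ on $B$, the diameter of $(B,d)$ is at most $1$. Dudley's bound then gives
$$\E \max_{w \in B} \eta_w \le C \int_0^1 \sqrt{\log N(B,d,\epsilon)}\, d\epsilon,$$
where $N(B,d,\epsilon)$ is the minimal number of $d$-balls of radius $\epsilon$ needed to cover $B$.

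The only real calculation is estimating the covering number, and here the dependence on $\ell$ drops out nicely. Because $|z-w|_\infty \le \epsilon^2 \ell$ implies $d(z,w) \le \epsilon$, any covering of the lattice box $B$ by $\ell^\infty$-balls of radius $\epsilon^2 \ell$ yields a $d$-cover of radius $\epsilon$. Since $B$ has side length $\ell$, one gets $N(B,d,\epsilon) \le \lceil \epsilon^{-2}\rceil^2 \le C\epsilon^{-4}$ for $\epsilon \in (\ell^{-1/2},1]$, while for the trivial regime $\epsilon \le \ell^{-1/2}$ one has $N(B,d,\epsilon) \le \ell^2$. The latter regime contributes at most $\ell^{-1/2}\sqrt{2\log \ell}$ to the Dudley integral, which is bounded uniformly in $\ell$ (and in fact tends to $0$).

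Inserting this into Dudley's bound, one obtains
$$\E \max_{w\in B}\eta_w \le C \int_0^1 \sqrt{4\log(1/\epsilon) + \log C}\, d\epsilon + o(1),$$
and the right-hand side is a finite constant independent of $\ell$ and of the specific field $\{\eta_w\}$ satisfying the hypothesis. This yields the claimed universal constant $C_0$. The argument has no serious obstacle; the only subtle point is verifying that the small-$\epsilon$ tail from the lattice regime $\epsilon \le \ell^{-1/2}$ does not introduce an $\ell$-dependent term, which is handled by the crude bound $N \le \ell^2$ together with the smallness of the integration interval.
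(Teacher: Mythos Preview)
Your proof is correct and follows exactly the route the paper indicates: the lemma is stated as ``an easy consequence of Dudley's criterion'' with a citation to \cite[Theorem 4.1]{A90}, and no proof is given in the paper itself. Your entropy calculation, including the handling of the small-$\epsilon$ regime, correctly fills in the details.
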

 The next lemma is usually referred to as the Borell, or
 Ibragimov-Sudakov-Tsirelson, inequality. See,  e.g., \cite[(7.4), (2.26)]{L01} as well as discussions in \cite[Page 61]{L01}.
 \begin{lemma} \label{Lem.concentration}
Let $\{ \eta_z : z \in B \}$ be a Gaussian field on a finite index set $B$. Set $\s^2 = \max_{z \in B} \Var (\eta_z)$. Then for all $\lambda, a > 0$,
 $$
\E[ \exp\{\lambda (\max_{z \in B} \eta_z - \E \max_{z \in B} \eta_z)\} ]\leq e^{\frac{\lambda^2 \sigma^2}{2}} , \ \mbox{ and } \ \P (| \max_{z \in B} \eta_z - \E \max_{z \in B} \eta_z | \ge a) \le 2 e^{-\frac {a^2}{2 \s^2}} .
 $$

 \end{lemma}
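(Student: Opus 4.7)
The plan is to reduce the lemma to Gaussian concentration for Lipschitz functions of a standard Gaussian vector. Let $n = |B|$, let $\Sigma$ denote the covariance matrix of the Gaussian vector $(\eta_z)_{z \in B}$, and write $\eta = \Sigma^{1/2} Z$ with $Z \sim \mathcal{N}(0, I_n)$. The map $F: \R^n \to \R$ defined by $F(u) := \max_{i} (\Sigma^{1/2} u)_i$ is $\s$-Lipschitz with respect to the Euclidean norm: writing $v_i$ for the $i$-th row of $\Sigma^{1/2}$, we have $\|v_i\|_2 = \sqrt{\Sigma_{ii}} \le \s$, so $F(u) - F(u') \le \max_i \langle v_i, u - u' \rangle \le \s \|u - u'\|_2$, and the reverse inequality is symmetric. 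Thus $\max_{z \in B}\eta_z = F(Z)$ is a $\s$-Lipschitz function of a standard Gaussian vector, and both statements of the lemma reduce to showing that every $\s$-Lipschitz $F: \R^n \to \R$ satisfies $\E \exp\{\lambda (F(Z) - \E F(Z))\} \le \exp(\lambda^2 \s^2 / 2)$ for all $\lambda \in \R$.

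To establish this MGF bound I would invoke the Gaussian logarithmic Sobolev inequality: for any smooth positive $g: \R^n \to \R$ with $\E g(Z) = 1$, $\E[g(Z) \log g(Z)] \le \tfrac12 \E[\|\nabla g(Z)\|^2 / g(Z)]$. Running the Herbst argument---applying log-Sobolev to $g := e^{\lambda F}/\E e^{\lambda F}$ and setting $H(\lambda) := \log \E e^{\lambda(F - \E F)}$---yields the differential inequality $\lambda H'(\lambda) - H(\lambda) \le \lambda^2 \s^2/2$, using that $\|\nabla F\| \le \s$ almost everywhere. Integrating, with the boundary data $H(0) = H'(0) = 0$, gives $H(\lambda) \le \lambda^2 \s^2/2$; since $F$ is merely Lipschitz rather than smooth, one first mollifies $F$ by convolving with a narrow Gaussian, applies the above to the mollification, and passes to the limit. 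Replacing $\lambda$ by $-\lambda$ handles the negative direction.

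The tail bound then follows by a Chernoff-type argument: $\P(F(Z) - \E F(Z) \ge a) \le e^{-\lambda a}\, \E e^{\lambda(F(Z) - \E F(Z))} \le \exp(-\lambda a + \lambda^2 \s^2 / 2)$, and optimizing over $\lambda = a/\s^2$ gives $\exp(-a^2/(2\s^2))$. Applying the same bound to $-F$ and combining via a union bound produces the factor of $2$ in the two-sided estimate.

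The main obstacle is the Gaussian log-Sobolev inequality itself, which requires an independent proof---either Gross's tensorization from the one-dimensional case, or a semigroup argument using the Ornstein--Uhlenbeck generator and de Bruijn's identity. An alternative route bypasses log-Sobolev entirely by deriving the tail bound directly from Borell's Gaussian isoperimetric inequality (via the observation that the set $\{F \le m\}$, where $m$ is a median of $F(Z)$, has measure at least $1/2$, so its $a/\s$-enlargement has complement of mass at most $e^{-a^2/(2\s^2)}/2$). Since the lemma is quoted in the paper as a classical fact with reference to \cite{L01}, in practice one would simply cite Borell's or the log-Sobolev inequality rather than reprove it.
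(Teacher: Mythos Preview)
Your proof is correct: the reduction to a $\sigma$-Lipschitz function of a standard Gaussian vector is clean, the Herbst argument from log-Sobolev yields the sub-Gaussian MGF bound, and the Chernoff step gives the two-sided tail. You even correctly anticipate the alternative via Borell's isoperimetric inequality.

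However, the paper does not actually prove this lemma. It is stated as the classical Borell--Ibragimov--Sudakov--Tsirelson inequality and simply cited from Ledoux's monograph \cite{L01}, specifically (7.4), (2.26), and the discussion on p.~61. So there is no ``paper's own proof'' to compare against---you have supplied a genuine proof where the authors chose to invoke a reference. Your closing remark that ``in practice one would simply cite Borell's or the log-Sobolev inequality'' is exactly what the paper does.
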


 \begin{prop} \label{Prop.smooth}
Suppose $k$ is large. For all $r \ge 1$,
 $$
\E (\varphi_r (x) - \varphi_r (y))^2 \le 2^{kr} |x-y|, \ \ \ \forall x, y \in \T.
 $$
  \end{prop}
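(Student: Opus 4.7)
Since the $h_j$ are independent, I would start by writing
\[
\E(\varphi_r(x)-\varphi_r(y))^2=\sum_{j=0}^{r-1}\E(h_j(x)-h_j(y))^2=2\sum_{j=0}^{r-1}\bigl(g_j(x,x)-g_j(x,y)\bigr).
\]
Since $A(x,x;R)=1$ we have $g_j(x,x)=k\log 2$, so each summand equals $2k\log 2\,(1-A(x,y;2^{-kj}))$. Setting $d=|x-y|$ and using the explicit formula \eqref{Eq.COVcovhj} together with $\sin\theta_{j,d}=2^{kj}d/2$, the bound \eqref{Eq.COVtheta} gives
\[
1-A(x,y;2^{-kj}) \ \le \ \tfrac{3}{\pi}\,2^{kj}d \qquad\text{when } 2^{-kj}\ge d/2,
\]
while $1-A(x,y;2^{-kj})=1$ when $2^{-kj}<d/2$ (disjoint balls).

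Next I would split the sum at $j=r_0=r_0(d)$ as in Section~\ref{Section.covariance}. The "intersecting" terms form a geometric series:
\[
\sum_{j=0}^{\min(r-1,r_0)}\!\!2k\log 2\cdot\tfrac{3}{\pi}\,2^{kj}d
\ \le\ \tfrac{6k\log 2}{\pi(2^k-1)}\cdot 2^{k(\min(r,r_0+1))}d,
\]
and the "disjoint" terms (only present when $m:=r-r_0-1\ge 1$) contribute at most $2mk\log 2$.

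Finally I would verify the desired inequality in the two resulting cases. If $r\le r_0+1$, the disjoint sum is empty and the geometric bound is directly $\le \frac{6k\log 2}{\pi(2^k-1)}\cdot 2^{kr}d$, which is at most $2^{kr}d$ once $k$ is large enough that the prefactor is $\le 1$. If $m\ge 1$, I would use $2^{kr_0}\le 2/d$ to bound $2^{k(r_0+1)}d\le 2^{k+1}$, turning the intersecting part into a $k$-dependent constant $\le 24k\log 2/\pi$; combined with $2mk\log 2$ this must be compared to $2^{kr}d\ge 2\cdot 2^{km}$ (which follows from $d\ge 2^{1-k(r_0+1)}$). The comparison reduces to $\tfrac{12k\log 2}{\pi}+mk\log 2\le 2^{km}$ for all $m\ge 1$, which holds once $k$ is large.

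The argument is almost entirely bookkeeping; the only place any care is needed is the second case, where the "disjoint-ball" increments each contribute an $O(k)$ term, so one must use that the target bound $2^{kr}d$ grows like $2^{km}$ in that regime and invoke exponential-dominates-linear with $k$ taken large. No result beyond the covariance identity \eqref{Eq.COVcovhj} and the trigonometric bound \eqref{Eq.COVtheta} from Section~\ref{Section.covariance} is required.
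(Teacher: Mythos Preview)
Your proposal is correct and follows essentially the same route as the paper: expand $\E(\varphi_r(x)-\varphi_r(y))^2$ as a sum over levels, bound each increment via \eqref{Eq.COVcovhj} and \eqref{Eq.COVtheta} (geometric growth for $j\le r_0$, constant for $j>r_0$), and split into the cases $r\le r_0+1$ and $m:=r-r_0-1\ge 1$. The paper's write-up differs only in cosmetic bookkeeping (it works directly with $\E(h_j(x)-h_j(y))^2\le 2kd\,2^{kj}$ and $\le 2k$ rather than with $1-A$, and in the second case it absorbs both pieces into $2^{kr}d$ via the factors $km/2^{km}$ and $4k/2^{k(m+1)}$), but the substance is identical.
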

 \begin{proof}
Use the notation in Subsection~\ref{Section.covariance}. Let $d = |x-y|$, $r_0 = r_0 (d)$. By \eqref{Eq.COVcovhj} and \eqref{Eq.COVtheta},
 $$
\E (h_j (x) - h_j (y))^2 =
\frac {2k \log 2} \pi
  \big( 2 \theta_{j , d} +  \sin (2 \theta_{j , d} ))
  \le \left\{ \begin{array}{ll} 2 k d 2^{k j} , &  \forall j \le r_0 , \\ 2 k , & \forall j > r_0 , \end{array} \right.
 $$
where we use $\sin (\theta_{j,d}) = 2^{kj} d / 2$ in the case $j \le r_0$.

If $r_0 \ge r-1 $,
 $$
\E (\varphi_r (x) - \varphi_r (y))^2 = \sum_{j =0}^{r-1} \E (h_j (x) - h_j (y))^2 \le 2kd \sum_{j =0}^{r-1} 2^{kj} \le 2^{kr} d.
 $$
Otherwise, $r_0 \le r - 2$.
 $$
\E (\varphi_r (x) - \varphi_r (y))^2 = 2 k (r - r_0 - 1) + \sum_{j =0}^{r_0} 2 k d 2^{kj} \le 2 k (r - r_0 - 1) + 4kd 2^{k r_0} .
 $$
Note $2^{kr} d \ge 2^{k(r-r_0 -1) + 1}$ and $r - r_0 - 1 \ge 1$. It follows that
 $$
\E (\varphi_r (x) - \varphi_r (y))^2 \le \frac {k(r - r_0 - 1)} {2^{k (r-r_0 - 1)}} 2^{kr} d  +    \frac {4k}{ 2^{k (r-r_0)}} 2^{kr} d \le 2^{kr} d ,
 $$
since $k$ is large enough.
\end{proof}

 \begin{cor} \label{Cor.maxvalue}
Suppose $k$ is large. Let $B$ denote a box of side length $\ell$, and set $M : = \max_{z \in B} \varphi_r (z)$. Then, $\E M \le \sqrt 2 C_0 \sqrt {2^{kr} \ell }$.
 \end{cor}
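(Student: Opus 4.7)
The plan is to combine Proposition~\ref{Prop.smooth} with the Dudley-type bound in Lemma~\ref{Lem.maxinbox}, after rescaling a fine discretization of $B$ to a box in $\mathbb{Z}^2$, and then pass to the continuous supremum using the almost-sure continuity of $\varphi_r$.

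First, I would translate the metric in Proposition~\ref{Prop.smooth} from Euclidean to sup-norm distance: since $|x-y|\le \sqrt{2}\,|x-y|_\infty$ in $\R^2$, the proposition yields
\[
\E(\varphi_r(x)-\varphi_r(y))^2 \le \sqrt{2}\cdot 2^{kr} |x-y|_\infty \quad \text{for all } x,y\in B.
\]
Next, for each integer $N\ge 1$, consider the grid $B_N := B\cap (\ell N^{-1}\Z^2)$ and parametrize it by $w\in\widetilde B_N := \{0,1,\dots,N\}^2\subset \Z^2$ via $w\mapsto c_B - (\ell/2, \ell/2) + (\ell/N) w$, so $\widetilde B_N$ is a box in $\Z^2$ of side length $N$. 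Setting
\[
\tilde\eta_w := \varphi_r(c_B - (\ell/2,\ell/2) + (\ell/N)w)\,\big/\,\sqrt{\sqrt{2}\cdot 2^{kr}\ell},
\]
the displayed bound above becomes $\E(\tilde\eta_w - \tilde\eta_{w'})^2 \le |w-w'|_\infty/N$, which is exactly the hypothesis of Lemma~\ref{Lem.maxinbox}. That lemma gives $\E\max_{w\in\widetilde B_N}\tilde\eta_w \le C_0$, equivalently
\[
\E \max_{z\in B_N}\varphi_r(z) \le C_0\sqrt{\sqrt{2}\cdot 2^{kr}\ell} = 2^{1/4}C_0\sqrt{2^{kr}\ell}\le \sqrt{2}\,C_0\sqrt{2^{kr}\ell}.
\]

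Finally, taking $N=2^m$ makes the sequence $B_N$ nested and dense in $B$. By the almost-sure continuity of $\varphi_r$ (noted at the beginning of Section~\ref{Section.coarsefield} via Kolmogorov's criterion applied to the Lipschitz kernels $g_j$), the maxima $\max_{z\in B_N}\varphi_r(z)$ increase almost surely to $M=\max_{z\in B}\varphi_r(z)$ as $m\to\infty$. Monotone convergence then yields $\E M\le \sqrt{2}\,C_0\sqrt{2^{kr}\ell}$, completing the proof.

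There is no real obstacle here; the only slightly non-routine step is justifying the passage from the supremum over the discrete grid to the supremum over the continuous box $B$, which is handled by continuity on the compact set $B$ together with monotone convergence along the nested dyadic grids.
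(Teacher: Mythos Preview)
Your proof is correct and follows essentially the same route as the paper: discretize $B$ on a dyadic grid, rescale so that Proposition~\ref{Prop.smooth} puts you in the setting of Lemma~\ref{Lem.maxinbox}, and then pass to the continuum limit via continuity of $\varphi_r$ and monotone convergence. The only cosmetic difference is that you make the Euclidean-to-sup-norm conversion explicit (yielding the slightly sharper intermediate constant $2^{1/4}$ before relaxing to $\sqrt 2$), whereas the paper absorbs this directly into the normalization $\varphi_r/\sqrt{2^{kr}\cdot 2\ell}$.
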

 \begin{proof}
We discretize $B$ by dividing $B$ into $2^{2n}$ identical boxes $\tilde B$'s
and identifying the lower left corner $\tilde c$ of each
$\tilde B$ as a point in $\Z^2$.
Denote by $M_n$ the maximum value of $\varphi_r$ over these $\tilde c$'s.
By the continuity of the coarse field,
$M_n$ increases to $M$ as $n \to \infty$.
By Proposition~\ref{Prop.smooth}, we can apply Lemma~\ref{Lem.maxinbox} to $\varphi_r / \sqrt {2^{kr} 2 \ell}$ and conclude that
$\E M_n \le \sqrt 2 C_0 \sqrt {2^{k r} \ell}$. The monotone convergence theorem
yields the result.
 \end{proof}
 \begin{cor} \label{Cor.fluctuation}
There exist $r_0 = r_0 (k, \d)$ such that the following holds for $k$ large and $r \ge r_0$. Enumerate the boxes in $\BD_r$ arbitrarily as $B_i$, $i =1, \ldots, 2^{2(kr+2)}$. Denote $M_i = \max_{x \in B_i^{*}} \varphi_r (x)$, $M_i^f = \sup_{x \in B_i^{*}} |\varphi_r (x) - \varphi_r (c_{B_i})|$, and
$M^f = \max_{1 \le i \le 2^{2 (k r + 2 ) }} M_i^f$. Then
 $$
\P (M_i \ge \d k r \log 2) \le 2 e^{- \frac 1 8 \d^2 k r \log 2}, \ \ \ \P (M^f \ge \d k r \log 2) \le e^{- r}.
 $$
 \end{cor}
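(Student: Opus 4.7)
The plan is to combine the expected-maximum bound from Corollary~\ref{Cor.maxvalue} with Borell's concentration inequality (Lemma~\ref{Lem.concentration}).

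For the first bound on $M_i$, I first observe that $\Var(\varphi_r(x)) = k\log 2 \sum_{j=0}^{r-1} A(x,x;2^{-kj}) = kr \log 2$ uniformly in $x$, since $A(x,x;R)=1$. Because $B_i^*$ is a $(5\cdot 2^{-kr})$-box, Corollary~\ref{Cor.maxvalue} directly yields $\E M_i \le \sqrt{10}\,C_0$, a universal constant. Choosing $r_0 = r_0(k,\d)$ large enough that $\E M_i \le \tfrac12 \d kr\log 2$, an application of Lemma~\ref{Lem.concentration} with $\s^2 = kr\log 2$ and $a = \d kr\log 2 - \E M_i \ge \tfrac12 \d kr\log 2$ gives the stated bound $2\exp(-\tfrac18 \d^2 kr\log 2)$.

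For the second bound on $M^f$, I would apply the same strategy to the centered Gaussian field $\eta_i(x) := \varphi_r(x) - \varphi_r(c_{B_i})$ on $B_i^*$. Its increments are those of $\varphi_r$, so Proposition~\ref{Prop.smooth} still gives $\E(\eta_i(x)-\eta_i(y))^2 \le 2^{kr}|x-y|$, and the Dudley/discretization argument used in the proof of Corollary~\ref{Cor.maxvalue} (applied to both $\eta_i$ and $-\eta_i$) yields $\E M_i^f \le C_1$, a universal constant. Moreover, by Proposition~\ref{Prop.smooth} the pointwise variance satisfies $\Var(\eta_i(x)) \le 2^{kr}|x-c_{B_i}| \le \tfrac{5\sqrt{2}}{2}$, so $\s^2$ is bounded by a universal constant. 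Borell then gives $\P(M_i^f \ge \d kr\log 2) \le 2\exp(-c(\d kr\log 2)^2)$ for a universal $c>0$, once $r \ge r_0(k,\d)$ ensures $\E M_i^f \le \tfrac12 \d kr\log 2$. A union bound over the $2^{2(kr+2)}$ boxes of $\BD_r$ yields $\P(M^f \ge \d kr\log 2) \le 2^{2kr+5}\exp(-c(\d kr)^2)$, which is $\le e^{-r}$ after enlarging $r_0$ if necessary.

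The crucial feature making the union bound succeed is the quadratic gain in the exponent for $M_i^f$: the centered field has $O(1)$ pointwise variance (the payoff from Proposition~\ref{Prop.smooth}), so Borell contributes $e^{-c(\d kr)^2}$, which comfortably dominates the count $2^{2kr+5}$ of boxes. No step is a serious obstacle; the only real bookkeeping is that $\E M_i$ and $\E M_i^f$ are $O(1)$ while the threshold is $\Theta(kr)$, so enlarging $r_0(k,\d)$ absorbs both expectations into a $\tfrac12$ factor and leaves the clean Borell estimates.
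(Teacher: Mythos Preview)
Your proposal is correct and follows essentially the same route as the paper: bound $\E M_i$ via Corollary~\ref{Cor.maxvalue}, apply Borell with $\sigma^2 = kr\log 2$ for the first estimate, then for the fluctuation use Proposition~\ref{Prop.smooth} to get an $O(1)$ pointwise variance for the centered field, apply Borell to obtain a $e^{-c(\delta kr)^2}$ tail, and union bound. The only cosmetic difference is that the paper bounds $\E \hat M_i^f$ by observing $\E[\max_{x\in B_i^*}(\varphi_r(x)-\varphi_r(c_{B_i}))] = \E M_i$ (subtracting a mean-zero variable) and handles the absolute value by symmetry, whereas you re-invoke the Dudley argument on $\pm\eta_i$; both give the same $O(1)$ bound.
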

 \begin{proof}
Note that, for all $x$,
$\E \varphi_r (x)^2 = k r \log 2$.  By  Corollary~\ref{Cor.maxvalue}, $\E M_i \le \sqrt 2 C_0 \sqrt {5} \le \frac 1 2 \d k r \log 2$ for $r \ge r_0 (k, \d)$. By Lemma~\ref{Lem.concentration},
 $$
\P ( M_i \ge \d k r \log 2) \le \P (M_i - \E M_i \ge \frac 1 2 \d k r \log 2) \le 2 e^{- (\frac 1 2 \d kr \log 2 )^2 / (2 k r \log 2) } = 2 e^{- \frac 1 8 \d^2 k r \log 2} .
 $$

Denote $\hat M_i^f := \sup_{x \in B_i^{*}} (\varphi_r (x) - \varphi_r (c_{B_i}))$. Similarly, we have $\P (\hat M_i^f \ge \d k r \log 2) \le 2 e^{- \frac 1 {32} (\d k r \log 2)^2 }$, noting $\E \hat M^f_i = \E M_i$ and  by Proposition~\ref{Prop.smooth}, $\E (\varphi_r (x) - \varphi_r (c_{B_i} ))^2 \le 2^{k r} |x - c_{B_i}  | \le 4$ for all $x \in B_i^{*}$. Furthermore, by a union bound and symmetry,
 $$
\P (M^f \ge \d k r \log 2) \le \sum_{i=1}^{2^{2 (k r + 2) }} 2 \P (\hat M^f_i \ge \d k r \log 2) \le 64 \times 2^{2 k r}  e^{- \frac {( \d k \log 2 )^2} {32} r^2 } \le e^{- r},
 $$
where in the last inequality we use $r \ge r_0 (k, \d)$.
 \end{proof}

\subsection{Construction of the LBM and LHK} \label{Section.LBM}

There are several ways to construct the Liouville measure $\mu^\gamma$ with respect to $h$, say, via the method of Gaussian multiplicative chaos \cite{Kahane85}. In our case, since we deal with $\gamma<1/2$, it is particulaly simple since $L^2$ methods apply. So, in the rest of this section we concentrate on the construction of the LBM and LHK.

Suppose $\e = 2^{-k r}$. Then,
 \begin{equation} \label{Eq.selfsym}
G(x,y) = G_r^{(2)} (\e x, \e y), \ \ \  \mbox{\em i.e. } G(\e x, \e y)  = G(x,y) + G_r^{(1)} (\e x, \e y)
 \end{equation}
since $A(\e x, \e y; 2^{- k (r+ j)}) = A (x, y ; 2^{- k j})$. By \eqref{Eq.COVcovhj},
 $$
G_r^{(1)} (\e x, \e y) \le G_r^{(1)} (\e x, \e x) = k r \log 2 = \log \frac 1 \e \ .
 $$
It follows that
 \begin{equation} \label{Eq.COVepsiloncov}
G(\e x, \e y) \le G(x,y)  + \log \frac 1 \e \ .
 \end{equation}

Let $\Omega_\e$ be a Gaussian field independent of $h$, with $\E \Omega_\e = 0$ and $\E \Omega_\e (x) \Omega_\e (y) = G_r^{(1)} (\e x, \e y)$. Actually, $\Omega_\e$ is a copy of the coarse field $\varphi_r$ if we regard $x$ as $\e x$.
Then
 $$
\{ h (\e x) \}_x \stackrel d = \{ h (x) + \Omega_\e (x) \}_x \ , \ \ \  \{ \Omega_\e (x) \}_x \stackrel d = \{ \varphi_r (\e x) \}_x \ .
 $$
Let $M = \max_{x \in [-1,1]^2} \Omega_\e (x)$. It follows that for $q \in [0, 4 / \gamma^2]$,
 $$
\E \mu^\gamma (B(0,\e))^q \le \e^{(2 + \frac 1 2 \gamma^2) q} \E e^{\gamma q M} \E \mu^\gamma (B(0,1))^q .
 $$
Note $M \stackrel d = \max_{x \in [-\e, \e]^2 } \varphi_r (x)$. By Lemma~\ref{Lem.concentration} and Corollary~\ref{Cor.maxvalue},  $\E e^{\gamma q M} \le \tilde C (q) \e^{ - \frac 1 2 \gamma^2 q^2}$ , where $\tilde C (q)$ is a constant depending on $q$ (as well as $\gamma$).  Thus
 $$
\E \mu^\gamma (B(0,\e))^q \le \hat C(q) \e^{\xi (q)} ,
 $$
where $\hat C (q) = \tilde C (q) \E \mu^\gamma (B(0,1))^q$, and
 $$
\xi (q) = (2 + \frac {\gamma^2} 2) q - \frac {\gamma^2} 2 q^2 .
 $$
For any $2^{-k (r + 1)} < \e \le 2^{-k r}$, we take $C(q) = \hat C(q) 2^{- k \xi (q)}$ and conclude that
 \begin{equation} \label{Eq.COVpowelaw}
\E  \mu^\gamma (B(0,\e))^q \le \E  \mu^\gamma (B(0, 2^{-kr}))^q  \le \hat C (q)  2^{- k r \xi (q)} \le C(q) \e^{\xi (q)} .
 \end{equation}

Recall that the coarse field $\varphi_r$ is smooth, so
 $$
H_r (u) : = \int_0^u e^{\gamma \varphi_r (X_v) - \frac 1 2 \gamma^2 \E \varphi_r (X_v)^2} d v
 $$
is well-defined.

With \eqref{Eq.COVepsiloncov} and \eqref{Eq.COVpowelaw},
one can follow the arguments in \cite[Section 2]{GRV13}
and obtain the following conclusions.
Let $F$ denote the PCAF associated with $\mu^\gamma$. Then,
$\P$-a.s., the limit of $H_r$ in $P^x$-probability exists
and it is the PCAF $F$; that is,
$P^x(\sup_{0 \le t \le T} |F(u) - H_r (u) |> a ) \to_{r\to\infty} 0$, for all $a > 0$ and $T > 0$. Further,
the process
$Y_t : = X_{F^{-1} (t)}$ is a strong Markov process, which
is called the LBM with respect to $\mu^\gamma$.
The LHK $p^\gamma_t (x,y)$ exists and satisfies $E^x f(Y_t) =
\int f (y)  p_t (x,y)  \mu^\gamma (d y)$. Furthermore, by \cite[Theorem 0.1]{GRV14}
and parallel arguments in \cite{MRVZ14},
$p^\gamma_t (x,y)$ is continuous in $(t,x,y)$.

\section{Fast/slow points/boxes of the fine field} \label{Section.finefield}

This section is devoted to the study of
properties  of the fine field.
For the lower bound on the LHK, we need to construct regions which are fast
to cross for the LBM,
while for the upper bound we will need to create obstacles, i.e.
regions which force the LBM to be slow. Toward this end,
we introduce in Definitions
\ref{def-fast} and \ref{def-slow}
the notions of fast/slow points and boxes, and
estimate,
in Lemma~\ref{Lemma.fastpoint} and \ref{Lemma.fastbox},
the probability that a point/box is fast/slow.

Throughout, we fix $s=2^{-kr}$ for an appropriate integer $r \ge 1$ (as
explained in the introduction, $r$, and hence $s$, are chosen so that
$t=s^{1+\frac12 \gamma^2+o(1)}$). This choice determines the
fine field $\psi_r$, see \eqref{eq-2}.
With this choice, one can construct the PCAF $F_r$ based on $\psi_r$
in the same way as $F$ was constructed, by replacing the measure $\mu^\gamma$ with the
truncated measure $\mu_r^\gamma$ written formally as $\mu_r^\gamma(dx)=
e^{\gamma \psi_r(x)-\frac{\gamma^2}{2} \E \psi_r^2(x)} dx$
(as before, the actual construction involves  the smooth cutoff
$\psi_{r,w}:=
  \sum_{j=r}^w h_j$ and taking the limit as $w\to\infty$). Formally,
  we write
  \begin{equation}
    \label{eq-11a} F_r (v) = \int_0^v
    e^{\gamma \psi_r  (X_u) - \frac 1 2 \gamma^2 \E \psi_r  (X_u)^2 } d u .
  \end{equation}
  We note also that the
  sequence of approximating PCAF
  $$F_{r,w}(v)
  : = \int_0^v e^{\gamma \psi_{r,w} (X_u) -
  \frac 1 2 \gamma^2 \E \psi_{r,w} (X_u)^2} d u
 $$
 converges as $w\to\infty$,
 in the sense described at the end of Section \ref{Section.construction}, to $F_r$.

Fix
$ \d_1, \d_2, \d_3, \e_1, \e_2, \e_3>0$ small, possibly depending on $k,\gamma$ and $s$.
Fix $z\in \T$ and recall that $B_\ell (z)$ denotes the $\ell$-box centered at $z$.
Let $\s_{z, \ell}$ denote the time that the SBM (starting from $z$) hits $\partial B_\ell (z)$.

\begin{defn}[Fast points and boxes]
  \label{def-fast}
A point $z$ is said to be {\em fast} if
\begin{equation}
  \label{eq-pg7}
  P^z (F_r (s^2 \wedge \s_{z, 6s} ) \le s^2/ \d_1) \ge 1 - \d_2.
\end{equation}
The set of fast points is denoted by $\ff$.
An $s$-box $B$ is said to be {\em fast} if $|B \cap \ff | \ge \d_3 s^2$.
\end{defn}

\begin{defn}[Slow points and boxes]
  \label{def-slow}
A point $z$ is said to be {\em slow} if
\begin{equation}
  \label{eq-pg7a}
  P^z (F_r (\s_{z,s} ) \ge \e_1 s^2 ) \ge \e_2.
\end{equation}
The set of slow points is denoted by
$\ss$. An $s$-box $B$ is said to be {\em slow} if
$|B \cap \ss | \ge \e_3 s^2$.
\end{defn}
We emphasize that the notions of fast/slow points and boxes depend on the
fine field $\psi_r$ only. Further, a point (or box) may be
fast and slow simultaneously.

Our fundamental estimate concerning fast/slow points is contained in the next lemma.
 \begin{lemma} \label{Lemma.fastpoint}
There exist universal positive constants $C_1, C_2, C_3$
such that the following hold.\\
{\rm (i)} $\P (z \in \ff) \ge 1 - C_1 \frac {\d_1} {\d_2} $. \\
{\rm (ii)} For $\e_1 \le C_2$ and $\e_2 \le C_3 e^{- 6 k \gamma^2}$,
we have $\P (z \in \ss) \ge 120 C_3 e^{- 6 k \gamma^2}$.
 \end{lemma}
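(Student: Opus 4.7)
Both parts rest on moment bounds for $F_r$ under the product law $\P \otimes P^z$, combined with elementary inequalities. Part (i) uses only the first moment and Markov, while part (ii) uses a second-moment bound and Paley--Zygmund; the factor $e^{-6k\gamma^2}$ will emerge from the bounded perturbation $|\l| \le 6k$ in \eqref{Eq.covariance}.

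\textbf{Plan for (i).} Fubini applied to the truncated PCAF $F_{r,w}$, together with the normalization $\E_\P e^{\gamma \psi_{r,w}(y) - \frac{\gamma^2}{2} \E \psi_{r,w}(y)^2} = 1$, gives
$$ \E_{\P \otimes P^z} F_{r,w}(s^2 \wedge \s_{z,6s}) = E^z[s^2 \wedge \s_{z,6s}] \le s^2, $$
and Fatou transfers this as an inequality to $F_r$. Two successive Markov inequalities---first under $P^z$ at threshold $s^2/\d_1$, then under $\P$ at threshold $\d_2 s^2/\d_1$---then yield $\P(z \notin \ff) \le \d_1/\d_2$, proving (i) with $C_1 = 1$.

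\textbf{Plan for (ii).} I will apply Paley--Zygmund to $F_r(\s_{z,s})$ under $\P \otimes P^z$. The first moment is $\E_{\P \otimes P^z} F_r(\s_{z,s}) = E^z[\s_{z,s}] = c_\ast s^2$ with $c_\ast > 0$ a universal constant; the matching lower bound to the Fatou-type estimate comes from $L^1$-convergence $F_{r,w} \to F_r$, itself a consequence of the uniform $L^2$ bound below. For the second moment, Fubini and the Gaussian identity $\E_\P M(x) M(y) = e^{\gamma^2 \ssov(\psi_r(x), \psi_r(y))}$, combined with the estimate $G_r^{(2)}(x,y) \le \log(s/|x-y|) + 6k$ for $|x-y| \le 2s$ (which follows from \eqref{Eq.selfsym} together with \eqref{Eq.covariance}, and is the relevant range since both Brownian paths remain in $B_s(z)$), followed by Brownian scaling of the finite double integral $E^0 \otimes E^0 \int_0^{\tau} \int_0^{\tau'} |\tilde X_u - \tilde X'_v|^{-\gamma^2} du \, dv$ (finite because $\gamma^2 < 2$), give
$$ \E_{\P \otimes P^z} F_r(\s_{z,s})^2 \le C' e^{6k\gamma^2} s^4. $$
Paley--Zygmund then produces $(\P \otimes P^z)(F_r(\s_{z,s}) \ge c_\ast s^2/2) \ge q_k := (c_\ast^2/(4 C')) e^{-6k\gamma^2}$. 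Setting $X := P^z(F_r(\s_{z,s}) \ge c_\ast s^2/2) \in [0,1]$ and using the trivial bound $\E_\P X \le \e_2 + \P(X \ge \e_2)$, I obtain $\P(X \ge \e_2) \ge q_k/2$ whenever $\e_2 \le q_k/2$. With the choices $C_2 := c_\ast/2$ and $C_3 := c_\ast^2/(960 C')$, any admissible $(\e_1, \e_2)$ satisfies $\e_1 s^2 \le c_\ast s^2/2$ (so $\{X \ge \e_2\} \subset \{z \in \ss\}$) and $\e_2 \le q_k/2$, yielding $\P(z \in \ss) \ge q_k/2 = 120 C_3 e^{-6k\gamma^2}$.

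\textbf{Main obstacle.} The one genuinely delicate point is the passage through the truncation limit $F_{r,w} \to F_r$; in particular, the \emph{lower} bound on $\E F_r$ used in Paley--Zygmund is not delivered by Fatou, and instead requires uniform integrability, which is supplied by the same uniform $L^2$ estimate that drives the second moment computation. The restriction $\gamma < 1/2$ (so $\gamma^2 < 2$) enters only through the integrability of $|x-y|^{-\gamma^2}$ against 2D Brownian motion, and the $e^{-6k\gamma^2}$ factor in the final bound is a direct record of the bounded perturbation $|\l| \le 6k$ of the GFF covariance.
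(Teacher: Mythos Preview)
Your approach to part (i) is essentially identical to the paper's (two nested Markov inequalities, with $\E_\P F_r(v)=v$); you simply take the cruder bound $E^z[s^2\wedge\s_{z,6s}]\le s^2$, giving $C_1=1$, whereas the paper records the sharper scaling constant $C_1=E^0(1\wedge\s_6)$.

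For part (ii) your route is genuinely different and in fact more streamlined. The paper conditions on the Brownian path, computes the $\P$-second moment $\E_\P F_r(\s)^2$ as a functional of the path, then restricts to a Brownian event $E_1\cap E_2$ of probability $\ge 1/4$ on which this functional is controlled and $\s$ is not too small, and finally applies Paley--Zygmund under $\P$ on that event. You instead apply Paley--Zygmund once, directly under the product law $\P\otimes P^z$, and then pass from the product-probability lower bound to a $\P$-probability lower bound via $\E_\P X \le \e_2+\P(X\ge\e_2)$. This avoids the auxiliary events $E_1,E_2$ entirely and is shorter; the paper's conditional approach buys nothing extra here.

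Two small repairs are needed in your write-up. First, your second-moment expression $E^0\otimes E^0\int_0^\tau\int_0^{\tau'}|\tilde X_u-\tilde X'_v|^{-\gamma^2}\,du\,dv$ with two independent Brownian motions is not the right object: $F_r(\s)^2=\int_0^\s\int_0^\s(\cdots)\,du\,dv$ involves the \emph{same} path at two times, so after taking $\E_\P$ and scaling you should have the single-path integral $E^0\int_0^{\hat\s}\int_0^{\hat\s}|\hat X_u-\hat X_v|^{-\gamma^2}\,du\,dv$ (this is what the paper bounds, citing \cite[Theorem 4.33]{MP10}). Second, as written your constant $C'$ depends on $\gamma$, so $C_3$ would not be universal; to fix this, use that $|\hat X_u-\hat X_v|\le\sqrt2$ and $\gamma^2\le 1/4$ imply $|\hat X_u-\hat X_v|^{-\gamma^2}\le 2|\hat X_u-\hat X_v|^{-1/4}$, exactly the monotonicity trick the paper employs.
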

 \begin{proof}
(i) Set $\xi = F_r^z (s^2 \wedge \s_{z,6s} ) $ and $\eta = P^z ( \xi > s^2 / \d_1 )$. By definition,
\begin{equation}
  \label{eq-pg7b}
  \P (z \notin \ff ) = \P (\eta > \d_2) \le \E \eta / \d_2.
\end{equation}
Note that
 $$
\E \eta = E^z \P (\xi > s^2 / \d_1 ) \le \frac{\d_1} {s^2} E^z \E \xi = \frac{\d_1} {s^2} E^z (s^2 \wedge \s_{z,6s} ).
 $$
Define $C_1 : = E^0 ( 1 \wedge \s_6 )$, where $\s_6$ is
the time that the SBM in $\R^2$ hits the boundary of $[-3,3]^2$.
Then, by scale invariance of Brownian motion,
$E^z (s^2 \wedge \s_{z, 6s} ) = C_1 s^2$.
Combining the last two displays with
\eqref{eq-pg7b}, one obtains
$\P (z \notin \ff ) \le C_1 \d_1 / \d_2$, completing the proof.

(ii) We use the abbreviation $\s = \s_{z,s}$ and set now
$\xi =  F_r^z (\s) $ and  $\eta = P^z (\xi \geq \e_1 s^2)$.
Without loss of generality, we suppose $z = (0, 0)$ and consistently
drop $z$ from the notation, writing $B_s=B_s(z)$.
Since $\eta \le 1$, we have
$\E \eta = \E \eta 1_{\eta \ge \e_2} +
\E \eta 1_{\eta < \e_2} \le \P (\eta \ge \e_2) + \e_2$. By definition,
 \begin{equation} \label{Eq.zisslow}
\P  ((0,0) \in \ss)  = \P (\eta \ge \e_2 ) \ge
\E \eta -  \e_2 = E \P (\xi \ge \e_1 s^2) - \e_2 .
 \end{equation}
We are going to estimate $ \P (\xi \ge \e_1 s^2) $
via the second moment method. Recall
that
$\E \xi = \s $, which has order $s^2$.  To compute the second moment,
note that since $\gamma<1/2$, the sequence of squares of
approximating
PCAFs $(F_{r,w})^2$ are uniformly (in $w$) integrable (see the argument just after
\eqref{eq-pg-8} below)
and therefore
%
 \begin{eqnarray*}
 \E \xi^2
  & = &
\E F_r (\s)^2 = \int_0^\s \int_0^\s \E e^{\gamma \psi_r  (X_u ) - \frac 1 2 \gamma^2 \E \psi_r  (X_u )^2 + \gamma \psi_r  (X_v ) - \frac 1 2 \E \psi_r  (X_v )^2} d u d v
  \\ & = &
\int_0^\s \int_0^\s  e^{\gamma^2 G_r^{(2)} (X_u , X_v  ) }  d u d v=
\int_{w, w^\prime \in B_s } e^{\gamma^2 G_r^{(2)} (w, w^\prime) } \nu (dw) \nu (dw^\prime) = : I_{\gamma^2},
 \end{eqnarray*}
 where $\{ X_u \}$ is the SBM starting from $(0,0)$,
 $G_r^{(2)}$ is defined in
 \eqref{eq-pg-2}, and
 $\nu$
denotes the occupation measure of $\{ X_u \}$ before exiting $B_s$,
i.e.
 $$
\int_{w \in B_s } f(w) \nu (dw) = \int_0^\s f (X_u ) d u .
 $$
Let $\hat w = 2^{kr} w$ and $\hat w^\prime = 2^{kr} w^\prime$,
with $\hat w,\hat w^\prime\in \T$.
By \eqref{Eq.covariance} and \eqref{Eq.selfsym},
$$G_r^{(2)} (w, w^\prime) = G(\hat w, \hat w^\prime) \le \log \frac 1 {|\hat w - \hat w^\prime|} +  6 k = \log \frac s {|w-w^\prime|} + 6k. $$
Consequently,
 $$
I_{\gamma^2} \le e^{6 k \gamma^2} s^{\gamma^2} \int_{w, w^\prime \in B_s} \frac 1 {|w- w^\prime|^{\gamma^2}} \nu (d w) \nu (d w^\prime) = e^{6 k \gamma^2} s^{\gamma^2} \int_0^\s \int_0^\s \frac 1 {|X_u  - X_v  |^{\gamma^2}} d u d v .
 $$
Let $\hat X_u  = \frac 1 s X_{s^2 u} $, and let
$\hat \s = \s / {s^2}$ be the time that the
SBM $\{ \hat X \}$ started at $(0,0)$ exits $[-1/2,1/2]^2$. Then
 $$
I_{\gamma^2} \le e^{6 k \gamma^2} s^4 \int_0^{\hat \s} \int_0^{\hat \s} \frac 1 {|\hat X_u  - \hat X_v |^{\gamma^2}} d u d v  .
 $$
Note $|\frac {\hat X_u - \hat X_v} {\sqrt 2}|^{\gamma^2} \ge |\frac {\hat X_u - \hat X_v} {\sqrt 2}|^{1/4}$, since $|\hat X_u - \hat X_v| \le \sqrt 2$ and $\gamma^2 \le 1/4$. Thus,
 $$
|\hat X_u - \hat X_v |^{\gamma^2} \ge \frac 1 2 | \hat X_u - \hat X_v |^{1/4} .
 $$
 It follows that
 \begin{equation}
   \label{eq-pg-8}
I_{\gamma^2} \le 2 e^{6 k \gamma^2} s^4 \hat I, \ \ \ \mbox{where }  \hat I = \int_0^{\hat \s} \int_0^{\hat \s} \frac 1 {|\hat X_u - \hat X_v|^{1/4}} d u d v .
\end{equation}
Note that
$\hat I $ is a random variable depending only on
the SBM $\{ \hat X \}$. By \cite[Theorem 4.33]{MP10},
$E \hat I < \infty$.  Consequently, there exists a
universal constant $\tilde C_1$ such that
$P( \hat I \le \frac 1 2 \tilde C_1) \ge 3/4$. Hence, the event $E_1 :
= \{ \E \xi^2 \le \tilde C_1 e^{6 k \gamma^2} s^4 \}$ has probability $P (E_1) \ge 3/4$. By the
scaling invariance of the SBM, there exists a universal positive constant
$C_2$ such that the event $E_2 = \{ \s \ge 2 C_2 s^2 \}$ has probability $\ge 3/4$. Thus, $P (E_1\cap  E_2) \ge 1/4$.

Assume $E_1\cap  E_2$ happens. On the one hand, on $E_1$,
 $$
\P ( \xi \ge \e_1 s^2) \ge \frac { \( \E \xi 1_{\xi \ge \e_1 s^2} \) ^2 }{\E \xi^2} \ge \frac 1 {\tilde C_1 e^{6 k \gamma^2} s^4} \( \E \xi 1_{\xi \ge \e_1 s^2} \) ^2  .
 $$
On the other hand, on $E_2$, $\xi = F_r (\s) \ge  F_r (2 C_2 s^2 ) = : \zeta$.
Note that
$2 C_2 s^2 = \E \zeta \le \E \zeta 1_{\zeta \ge \e_1 s^2} + \e_1 s^2$.
We have $\E \xi 1_{\xi \ge \e_1 s^2}  \ge \E \zeta 1_{\zeta \ge \e_1 s^2} \ge (2 C_2 - \e_1) s^2 \ge C_2 s^2$, where we use the assumption $\e_1 \le C_2$. Thus,
 $$
\P ( \xi \ge \e_1 s^2) \ge \frac {\( C_2 {s^2} \) ^2 } {\tilde C_1 e^{6 k \gamma^2} s^4 } = \frac {C_2^2} {\tilde C_1 } e^{- 6 k \gamma^2} , \ \ \ \mbox{on }
E_1 \cap E_2.
 $$
Consequently,
 $$
E \P (\xi \ge  \e_1 s^2) \ge E \(  \P ( \xi \ge \e_1 s^2) {\bf 1}_{E_1 \cap
E_2} \)  \ge  \frac {C_2^2} {\tilde C_1 } e^{- 6 k \gamma^2} \times P (E_1\cap
E_2) \ge \frac {C_2^2} {4 \tilde C_1 } e^{- 6 k \gamma^2} .
 $$
Take $C_3 : = C_2^2 / (484 \tilde C_1)$. Then $E \P (\xi \ge  \e_1 s^2)
\ge 121  C_3 e^{- 6 k \gamma^2}$. This, together with \eqref{Eq.zisslow}
and the assumption $\e_2 \le C_3 e^{- 6 k \gamma^2}$, implies the result.
 \end{proof}

The next lemma estimates the probability that an $s$-box $B$ is fast/slow.
 \begin{lemma}\label{Lemma.fastbox}
{\rm (i)} $\P (B \mbox{ is fast}) \ge 1 - C_1 \frac {\d_1} {\d_2} - \d_3$. \\
{\rm (ii)}  Suppose $\e_2 \le C_3 e^{- 6 k \gamma^2}$ and $\e_3 \le C_3^2 e^{- 12 k \gamma^2}$. Then, $\P (B \mbox{ is slow}) \ge 1 -  \e_1^{C_3 e^{-6k\gamma^2} 2^{- 2k}}$ if $\e_1$ is less than some constant $\e_1 (\gamma, k)$.
 \end{lemma}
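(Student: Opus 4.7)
The plan is a direct Fubini-plus-truncation argument. Setting $X := |B \cap \ff|/s^2 \in [0,1]$, Fubini together with Lemma~\ref{Lemma.fastpoint}(i) gives
$$
\E X = \frac{1}{s^2}\int_B \P(z \in \ff)\,dz \ge 1 - C_1\,\d_1/\d_2,
$$
and splitting $\E X = \E[X\cdot 1_{X \ge \d_3}] + \E[X\cdot 1_{X < \d_3}] \le \P(B \text{ is fast}) + \d_3$ (using $X\le 1$) yields (i) by rearrangement.

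\textbf{Part (ii).} The obstacle is that Lemma~\ref{Lemma.fastpoint}(ii) only yields the $\e_1$-independent lower bound $\P(z \in \ss) \ge 120\,C_3 e^{-6k\gamma^2}$, while we need a bound tending to $1$ polynomially as $\e_1 \to 0$. The plan is to exploit the monotonicity of $\ss$ in $\e_1$ via two applications of Markov's inequality, thereby reducing the claim to a polynomial lower-tail estimate on the PCAF.

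\emph{Step 1 (double Markov).} Since $\{z \notin \ss\} = \{P^z(F_r(\s_{z,s}) < \e_1 s^2) > 1 - \e_2\}$, Markov's inequality combined with Fubini gives
$$
\P(B \text{ not slow}) = \P\bigl(|B \setminus \ss| > (1 - \e_3) s^2\bigr) \le \frac{1}{(1 - \e_3)(1 - \e_2)}\,\P \otimes P^z\bigl(F_r(\s_{z,s}) < \e_1 s^2\bigr).
$$
\emph{Step 2 (polynomial tail).} By the scale invariance \eqref{Eq.selfsym}, rescaling lengths by $s^{-1}$ and times by $s^{-2}$ shows that $F_r(\s_{z,s})/s^2$ has the same law as $F(\hat\s)$, where $\hat\s$ is the SBM exit time of $B_1$ and $F$ is the PCAF of the full field $h$. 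It thus suffices to prove $\P \otimes P^0(F(\hat\s) < \e_1) \le C_{k,\gamma}\,\e_1^{q}$ for some $q > C_3 e^{-6k\gamma^2} 2^{-2k}$. My plan is to pick a fixed sub-disk $D \subset B_1$ that the SBM visits with probability bounded below, lower-bound the portion of $F$ accumulated during an SBM sojourn in $D$ by $c\,\mu^\gamma(D)$, and then invoke the negative-moment bound $\E\mu^\gamma(D)^{-q} < \infty$ (valid in the $L^2$ regime $\gamma<1/2$ by a second-moment computation parallel to \eqref{Eq.COVpowelaw} together with the multi-scale independence of the layers $h_j$). This gives $\P(\mu^\gamma(D) < \e_1) \le C_q\,\e_1^q$, and for $\e_1 < \e_1(\gamma,k)$ small enough the $k,\gamma$-dependent prefactor is absorbed into $\e_1^{q - C_3 e^{-6k\gamma^2} 2^{-2k}}$, yielding the claim.

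\textbf{Main obstacle.} The delicate step is Step 2: producing a uniform pathwise lower bound of the form $F(\hat\s) \ge c\,\mu^\gamma(D)$ (which will require combining hitting and sojourn estimates for 2D SBM with the time-change construction of the LBM from Section~\ref{Section.LBM}) and extracting a negative-moment bound on $\mu^\gamma(D)$ with an effective enough exponent. The factor $2^{-2k}$ in the target exponent is suggestive: it corresponds naturally to choosing $D$ at side-length $s/2^k$, one refinement below $B_s$, thereby isolating the coarsest layer $h_r$ of the fine field $\psi_r$ and working conditionally on it.
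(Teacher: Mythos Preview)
Your Part (i) is correct and coincides with the paper's argument.

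Part (ii), however, has a genuine gap at Step~2. The pathwise inequality $F(\hat\s) \ge c\,\mu^\gamma(D)$ that you propose is false: the SBM occupation measure inside $D$ is singular with respect to Lebesgue (the range of planar Brownian motion has zero area), so the PCAF accumulated along the path bears no pointwise comparison to $\mu^\gamma(D) = \int_D e^{\gamma h - \ldots}\,dw$. What the Green-function/Revuz correspondence \emph{does} give is $E^0[F(\hat\s)\mid h] \ge c\,\mu^\gamma(D)$; combined with a second-moment bound and Paley--Zygmund this yields that $P^0(F(\hat\s) \ge \e_1 \mid h)$ is bounded \emph{below} by a constant --- which is precisely Lemma~\ref{Lemma.fastpoint}(ii), not the polynomially small \emph{upper} bound on $\P\otimes P^0(F(\hat\s) < \e_1)$ that your double-Markov reduction requires. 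Negative-moment control on $\mu^\gamma(D)$ does not convert an anti-concentration statement into the needed concentration statement for the SBM, so the argument stalls.

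The paper's proof of (ii) is structurally different and does not touch negative moments of $\mu^\gamma$. It sets $n = 2^{kr_0}$ with $r_0 = \lfloor \tfrac{1}{k}\log_2\sqrt{-\log\e_1}\rfloor$, so that $n^2 \ge (-\log\e_1)\,2^{-2k}$, subdivides $B$ into $(s/n)$-boxes, and introduces an auxiliary ``$\widetilde{\mathrm{slow}}$'' property based on the finer field $\psi_{r+r_0}$. Because $\psi_{r+r_0}$ decorrelates at distance $2s/n$, the events ``sub-box $\tilde B_i$ contains many $\widetilde{\mathrm{slow}}$ points'' are \emph{independent} across a well-separated family of $\sim n^2/16$ sub-boxes, each holding with probability bounded below by a $(k,\gamma)$-constant via Lemma~\ref{Lemma.fastpoint}(ii). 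A Chernoff bound then gives failure probability $\le e^{-c n^2} \le \e_1^{c\,2^{-2k}}$; this is the actual source of the factor $2^{-2k}$ in the exponent (it comes from the rounding $n \ge 2^{-k}\sqrt{-\log\e_1}$, not from a sub-disk at scale $s/2^k$ as you guessed). A final step shows $\widetilde{\mathrm{slow}} \Rightarrow \mathrm{slow}$ on $B$ with overwhelming probability by bounding $\max|\psi_r - \psi_{r+r_0}|$, which is a rescaled copy of the coarse field $\varphi_{r_0}$ and is handled by Corollary~\ref{Cor.maxvalue} and Lemma~\ref{Lem.concentration}. The mechanism producing polynomial decay in $\e_1$ is therefore \emph{independence across $\sim n^2$ sub-boxes}, not a moment bound on a single random variable.
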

 \begin{proof}
(i) By Lemma~\ref{Lemma.fastpoint}(i) and the translation invariance of the fine field $\psi_r$, $\E |B \cap \ff | \ge ( 1 - C_1 \frac {\d_1} {\d_2} ) s^2$.  Since $| B \cap \ff  | \le |B| \le s^2$, $| B \cap \ff  | \le | B \cap \ff  | {\bf 1}_{ |B \cap \ff  | < \d_3 s^2} + | B \cap \ff  | {\bf 1}_{ |B \cap \ff  | \ge \d_3 s^2 }  \le \d_3 s^2 + s^2 {\bf 1}_{|B \cap \ff  | \ge \d_3 s^2 } $. Hence, $\E |B \cap \ff | - \d_3 s^2 \le s^2 \P (|B \cap \ff | \ge \d_3 s^2 ) = s^2 \P (B \mbox{ is fast})$. Therefore, $\P (B \mbox{ is fast}) \ge \frac 1 {s^2} \( \E | B \cap \ff  | - \d_3 s^2 \) \ge 1 - C_1 \frac {\d_1} {\d_2} - \d_3$.

\medskip

(ii) Our strategy is as follows. We will divide $B$ into
$n^2$ identical  boxes $\tilde B$ of side length $\tilde s = s / n$,
where $n$ is to be chosen properly to support the following arguments.
In each box
$\tilde B$, one can find $O(s^2 / n^2)$ slow points in average,
by Lemma~\ref{Lemma.fastpoint}(ii). Then, we would like
to use large deviations to show that, with high probability,
there are at least $\d_3 s^2$ slow points in $B$,
i.e. $B$ is slow. Unfortunately, the random variables
$|\tilde B \cap \ss |$'s, measuring the size of the cluster of
slow points in the smaller boxes $\tilde B$, are heavily dependent.
To obtain the
appropriate large deviation estimates by independence,
we will replace $\s_{z, s}$ in \eqref{eq-pg7a} by
$\s_{z, \tilde s} $, and use a  new parameters $\tilde \e_1$
to
define the property of a point to be
$\widetilde{slow}$.
Let $\tilde S$ consist of $\widetilde{\mbox{slow}}$ points.
Then, the random variables
$|B_i \cap \tilde \ss |$'s are almost independent, and good large
deviation estimates for their sums can be obtained.
Finally, we will show that by choosing $\tilde \e_1$
properly, $B \cap \tilde S \subset B \cap S$ with high probability,
completing the proof.

The actual
proof is in four steps. In the first step, we set the parameters $n$ and
$\tilde \e_1$, and give the definition of being $\widetilde{\mbox{slow}}$. In the second step, we will show $|B \cap \tilde S| \ge \d_3 s^2$ with high probability. In the third step, we will show $B \cap \tilde S \subset B \cap S$ with high probability. In the last step, we collect the results obtained and show (ii).

\medskip

\noindent {\bf Step 1.} Let
 \begin{equation} \label{Eq.defofn}
\k := \sqrt {- \log \e_1} , \ \ \ r_0 := \lfloor \frac 1 k \log_2 \k \rfloor , \ \ \ n := 2^{k r_0}.
 \end{equation}
Equivalently, we write $\e_1$ in the form of $e^{- \k^2}$, pick $r_0$ such that $2^{k r_0} \le \k < 2^{k (r_0 + 1)}$, and set $n = 2^{k r_0}$. Take
 \begin{equation} \label{Eq.tildeepsl}
\tilde \e_1 =  n^{2 \gamma n  + \frac {\gamma^2} 2  + 2 } \e_1 .
 \end{equation}
The parameters $n$ and $\tilde \e_1$ depend only on $\e_1$ (and $k$,$\gamma$). As $\e_1 \to 0$, we have $\k \to \infty$, and $r_0 \to \infty$ as well as $n \to \infty$. Furthermore, $\tilde \e_1 \to 0$, since $\tilde \e_1 \le e^{(2 \gamma n  + \gamma^2 / 2 + 2) \log n} e^{ - \k^2} \le  e^{(2 \gamma n  + \gamma^2 / 2 + 2) \log n - n^2} $ and $n \to \infty$. Therefore, there exists a constant $\e_1 (\gamma, k)$ such that $\tilde \e_1 \le C_2$ if $ \e_1 \le \e_1 (\gamma, k)$.  Furthermore, we pick $\e_1 (\gamma, k)$ such that
 \begin{equation} \label{Eq.largenslowbox}
2 e^{- \frac {(2 n \log n - 2 C_0 \sqrt {n} )^2} {2 \log n}} \le  e^{- n^2 \log n }  , \ \ \  e^{ - 2 C_3 e^{- 6 k \gamma^2} n^2 } + e^{ - n^2  \log n }  \le e^{  - C_3 e^{- 6 k \gamma^2} n^2 }
 \end{equation}
as $\e_1 \le \e_1 (\gamma, k)$. Note that $\tilde \e_1$ and $\e_2$ satisfy
the assumptions in Lemma~\ref{Lemma.fastpoint}(ii) for $\e_1$ and $\e_2$.

Let $\tilde s := s / n$, and $\tilde r : = r+r_0$ such that $\tilde s = 2^{- k \tilde r}$. We say that
 $$
\mbox{a point $z$ is $\widetilde{\mbox{slow}}$ if } P^z (F_{\tilde r} (\s_{z, \tilde s} ) \ge \tilde \e_1 \tilde s^2 ) \ge  \e_2 .
 $$
Denote by $\tilde \ss$ the set of $\widetilde{\mbox{slow}}$ points.

\medskip

\noindent {\bf Step 2.} Suppose $\tilde B$ is an $\tilde s$-box.
Applying Lemma~\ref{Lemma.fastpoint}(ii) to the $\widetilde{\mbox{slow}}$ points, we obtain
$\E | \tilde B \cap \tilde \ss | \ge 120 C_3 e^{- 6 k \gamma^2}  \tilde s^2 = 2 a \tilde s^2$, where we denote
 \begin{equation} \label{Eq.aforslowpoint}
a = 60 C_3 e^{- 6 k \gamma^2} .
 \end{equation}
Note that
$|\tilde B \cap \tilde \ss| \le \tilde s^2$, which implies that $ \E |\tilde B \cap \tilde \ss | =  \E |\tilde B \cap \tilde \ss | {\bf 1}_{ |\tilde B \cap \tilde \ss | \ge a \tilde s^2}  + \E |\tilde B \cap \tilde \ss | {\bf 1}_{ |\tilde B \cap \tilde \ss | < a \tilde s^2 }  \le \tilde s^2 \P ( |\tilde B \cap \tilde \ss | \ge a \tilde s^2 ) + a \tilde s^2$.
It follows that
 \begin{equation} \label{Eq.probslowsmallbox}
\P \( |\tilde B \cap \tilde \ss | \ge a \tilde s^2 \)  \ge \frac 1 {\tilde s^2} \( \E |\tilde B \cap \tilde \ss | - a \tilde s^2 \)  \ge a .
 \end{equation}

Without loss of generality, we suppose $B=[0,s)^2$. We next partition $B$
into $n^2$ identical $\tilde s$-boxes, from which we pick those of the
form
$[4 a \tilde s, (4 a + 1) \tilde s ) \times
[4 b \tilde s, (4 b + 1) \tilde s ) $, $a,b \in \Z \cap [0, n/4)$, and
  enumerate them arbitrarily as
  $\tilde B_i$, $i = 1 , \cdots, (n/4)^2$. Note that
  $\tilde B_i \cap \tilde \ss $ depends on the restriction
  of the fine field $\psi_{\tilde r}$ to
  the $(2 \tilde s)$-box centered at $c_{\tilde B_i}$,
  and $\psi_{\tilde r} (w)$ is independent of
  $\psi_{\tilde r} (w^\prime)$ if $|w-w^\prime| \ge 2 \tilde s$. It follows
  that the random variables
  $|\tilde B_i \cap \tilde \ss |$'s are mutually independent. Let
 $$
\chi_i = 1 \ \mbox{if } |\tilde B_i \cap \tilde \ss| \ge a \tilde s^2, \ \ \ \chi_i = 0 \mbox{ otherwise}.
 $$
Then $\sum_{i=1}^{n^2/16} \chi_i \ge \e_3 s^2 / (a  \tilde s^2 ) $ implies $|B \cap \tilde \ss|    \ge a \tilde s^2 \times \e_3 s^2 / (a  \tilde s^2 ) = \e_3 s^2$. It follows that
 \begin{equation} \label{Eq.FFLD}
\P (|B \cap \tilde S | \ge \e_3 s^2) \ge \P \( \sum_{i=1}^{n^2/16} \chi_i  \ge \frac {\e_3 s^2}{a \tilde s^2} \) .
 \end{equation}

Now we estimate the right hand side of \eqref{Eq.FFLD} via large deviations.
Note that the $\chi_i$'s are Bernoulli random variables, with $P(\chi_i=1)
\geq a$, see \eqref{Eq.probslowsmallbox}, and therefore
 $$
\E e^{- \chi_i} = 1 -  ( 1 - e^{-1} )  \P (\chi_i = 1)  \le 1 - (1 - e^{-1})  a \le \exp ( - (1 - e^{-1}) a) .
 $$
Using independence and Chebyshev's inequality we get
\begin{equation}
  \label{eq-pg11}
\P \( \sum_{i=1}^{n^2/16} \chi_i < \frac {\e_3 s^2 }{ a  \tilde s^2 } \)  \le \exp \( \frac {\e_3 s^2}{a  \tilde s^2} \) \(  \E  e^{- \chi_1 } \) ^{n^2 / 16} \le  \exp \( \frac {\e_3 s^2}{a  \tilde s^2} - \frac {n^2}{16} (1 - e^{-1}) a \) .
\end{equation}
Recall that
$\tilde s = s / n$, $a = 60 C_3 e^{-6 k \gamma^2}$, see
\eqref{Eq.aforslowpoint}, and $\e_3 \le C_3^2 e^{-12 k \gamma^2} = (\frac a {60} )^2$ by assumption. Thus,
 $$
\frac {\e_3 s^2} {a  \tilde s^2} - \frac {n^2}{16} (1 - e^{-1}) a  \le \( \frac 1 {60^2} - \frac {1-e^{-1}} {16} \)  an^2 \le - 2 C_3 e^{- 6 k \gamma^2} n^2.
 $$
Together with
\eqref{eq-pg11}
and \eqref{Eq.FFLD}, we conclude that
 \begin{equation} \label{Eq.LDforchi}
\P (|B \cap \tilde S| \le \e_3 s^2) \le \P \( \sum_{i=1}^{n^2/16} \chi_i < \frac {\e_3 s^2 }{ a  \tilde s^2 } \)  \le  e^{- 2 C_3 e^{- 6 k \gamma^2} n^2}.
 \end{equation}

\medskip

\noindent {\bf Step 3.} Abbreviate $\s = \s_{z, s}$ and $\tilde \s = \s_{z, \tilde s} $. Recall that $z\in \ss$ if $P^z (F_r (\s) \ge \e_1 s^2) \ge \e_2$ while $z\in \tilde \ss$ if $P^z (F_{\tilde r} (\tilde \s) \ge \tilde \e_1 \tilde s^2) \ge  \e_2$. Since $\tilde s < s$, it holds that $\tilde \s < \s$.
Consequently, $F_r^z (\tilde \s) \le F_r^z ( \s )$. Therefore,
 \begin{equation} \label{Eq.FFcompareFr}
P^z (F_r (\s) \ge \e_1 s^2) \ge P^z (F_r (\tilde \s) \ge  \e_1  s^2 ) , \ \ \ \mbox{for all } z.
 \end{equation}
We are going to compare $F_r^z (\tilde \s)$ with $F_{\tilde r}^z (\tilde \s)$,
and show below that
  \begin{equation} \label{Eq.compareFrFr}
\P \(  \ee \) \ge 1 -  e^{- n^2 \log n }, \ \mbox{where } \ee =
\{P^z (F_r (\tilde \s) \ge \e_1 s^2)
  \ge P^z (F_{\tilde r} (\tilde \s) \ge \tilde \e_1 \tilde s^2)  \; \mbox{\rm for all} \;
z\in B\} .
 \end{equation}
Combined \eqref{Eq.FFcompareFr}, it follows that if $\ee$ occurs then $z\in \tilde \ss \Rightarrow
z\in \ss$, for all $z \in B$, and in particular $\ee \subset \{  B \cap \tilde \ss \subset B \cap \ss \} $. It follows then
from
\eqref{Eq.compareFrFr}  that
 \begin{equation} \label{Eq.FFslowtslow}
\P ( B \cap \tilde \ss  \nsubseteq B \cap \ss)  \le \P (\ee ^c) \le e^{- n^2 \log n} ,
 \end{equation}
 which we will use in the next step. Before doing that, we first complete the
 proof of
\eqref{Eq.compareFrFr}.

Let $\phi = \psi_r  - \psi_{\tilde r}$, which has covariance
 $$
G_{r,\tilde r} (w_1, w_2) = k \log 2 \sum_{j=r}^{\tilde r - 1} A (w_1, w_2; 2^{-kj}) .
 $$
Set
 $$
M = \max_{w \in \breve{B} } ( -\phi (w)) ,
\ \ \ \mbox{where } \breve{B} = [- \frac 1 2 s, \frac 3 2 s )^2 \ \ \mbox{is the $2 s$-box centered at $c_B$.}
 $$
Set $\hat B = 2^{kr} \breve{B}$, which has side length $2$. Note that
$A(w_1, w_2, 2^{-kj}) = A(\hat w_1, \hat w_2, 2^{- k (j-r)})$, where $\hat w_i = 2^{k r} w_i$. Therefore, $\{ \phi (w), w \in \breve{B} \}$ is a copy of the coarse field $\{ \varphi_{r_0} (\hat w), w \in \hat B \}$, with $w$ being identified as $\hat w = 2^{k r} w$, where we recall that $r_0 = \tilde r - r$ and is defined in \eqref{Eq.defofn}. By Corollary~\ref{Cor.maxvalue}, $\E M \le \sqrt 2 C_0 \sqrt {2^{k r_0} \times 2} = 2 C_0 \sqrt {n}$. Since $\E \phi (w)^2 =  k r_0 \log 2 = \log n$ for all $w$, we have
 \begin{equation} \label{Eq.Maxslowbox}
\P ( M \ge 2 n \log n) \le 2 e^{- \frac {(2 n \log n -2  C_0 \sqrt {n} )^2} {2 \log n}} \le  e^{- n^2 \log n }   ,
 \end{equation}
where we use Lemma~\ref{Lem.concentration}, and the last inequality holds by \eqref{Eq.largenslowbox}. Noting for all $z \in B$, the $\tilde s$-box centered at $z$ is contained in $\breve{B}$, we have $X_u \in \breve B$ for $u \le \tilde \s$, where we drop the superscript $z$ in $X_u$. Therefore, on the event $\{ M < 2 n \log n \}$, it holds that
for all $z\in B$,
 \begin{eqnarray*}
F^z_r (\tilde \s)
 & = &
\int_0^{\tilde \s} e^{\gamma \psi_{\tilde r} (X_v) - \frac {\gamma^2} 2 \E \psi_{\tilde r} (X_v)^2 } \times e^{\gamma \phi (X_v) - \frac {\gamma^2} 2 \E \phi (X_v)^2 } d v
 \\ & \ge &
e^{- \gamma M - \frac {\gamma^2} 2 k r_0 \log 2 } F_{\tilde r} (\tilde \s ) \ge e^{- \gamma 2 n \log n - \frac {\gamma^2} 2 \log n } F_{\tilde r} (\tilde \s ) ,
 \end{eqnarray*}
where in the first equality we use the independence of $\psi_{\tilde r}$ and $\phi$. By the definition of $\tilde \e_1$ in \eqref{Eq.tildeepsl},
 $$
P^z (F_r (\tilde \s) \ge \e_1 s^2) \ge P^z (F_{\tilde r} (\tilde \s)  \ge e^{\gamma 2 n \log n + \frac {\gamma^2} 2 \log n } \e_1 s^2 )  = P^z (F_{\tilde r} (\tilde \s)  \ge  \tilde \e_1 \tilde s^2 ).
 $$
Therefore, we conclude that $\{M<2n\log n\}\subset \ee$. This, together with
\eqref{Eq.Maxslowbox}, implies \eqref{Eq.compareFrFr} and completes the proof of
 \eqref{Eq.FFslowtslow}.

\medskip

\noindent {\bf Step 4.} If $|B \cap \tilde S| \ge \e_3 s^2$ and $B \cap \tilde S \subset B \cap S$, we have $|B \cap S| \ge \e_3 s^2$, i.e. $B$ is slow. Hence,
 $$
1 - \P (B \mbox{ is slow}) \le \P (|B \cap \tilde S| \le \e_3 s^2) + \P ( B \cap \tilde S \nsubseteq B \cap S ).
 $$
By \eqref{Eq.LDforchi} and \eqref{Eq.FFslowtslow}, it follows that
 \begin{eqnarray*}
1 - \P (B \mbox{ is slow})
 & \le &
\exp \{ - 2 C_3 e^{- 6 k \gamma^2} n^2 \} + \exp \{ - n^2 \log n \}
 \\ & \le &
\exp \{  - C_3 e^{- 6 k \gamma^2} n^2 \} \le \exp \{  - C_3 e^{- 6 k \gamma^2} 2^{- 2 k} \k^2 \} = \e_1^{C_3 e^{- 6 k \gamma^2} 2^{- 2k}} ,
  \end{eqnarray*}
where in the second inequality we use \eqref{Eq.largenslowbox} and in the last two inequalities we use \eqref{Eq.defofn}. This implies (ii) and completes the
proof of the lemma.
  \end{proof}

The next lemma bounds below $F_r^z (\sigma_{z, 3s})$ uniformly in $z$ in slow boxes.
 \begin{lemma} \label{Lemma.claim2}
There exists a universal positive constant  $C_4$ such that the following holds. Suppose $B$ is slow. Then, $P^z (F_r (  \s_{z,3s}) \ge \e_1 s^2) \ge C_4 \e_2 \e_3$ for all $z$ in the closure of $B$.
 \end{lemma}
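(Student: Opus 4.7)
My plan is to prove the lemma via a strong Markov reduction followed by a Paley--Zygmund bound on the occupation time of $B\cap\ss$ by SBM before it exits the enlarged box $B_{3s}(z)$. Set $A:=B\cap\ss$, $D:=B_{3s}(z)$ and $\s:=\s_{z,3s}$. Since $z\in\bar B$, any $w\in B$ satisfies $\|w-z\|_\infty\le s$, so $B_s(w)\subset D$. Let $\tau$ be the first hitting time of $A$ by the SBM. On $\{\tau<\s\}$, conditional on $X_\tau$, the increment $F_r(\s)-F_r(\tau)$ is by the strong Markov property at least as large as $F_r(\s_{X_\tau,s})$ started at $X_\tau$ (using $B_s(X_\tau)\subset D$); by the defining inequality for $X_\tau\in\ss$ this is $\ge\e_1 s^2$ with conditional probability $\ge \e_2$. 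Hence
$$P^z(F_r(\s)\ge\e_1 s^2)\ \ge\ \e_2\,P^z(\tau<\s),$$
and it suffices to exhibit a universal $c>0$ with $P^z(\tau<\s)\ge c\,\e_3$.

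Introduce the occupation time $T:=\int_0^\s\mathbf 1_A(X_u)\,du$, so $\{\tau<\s\}=\{T>0\}$, and apply Paley--Zygmund: $P^z(T>0)\ge (E^zT)^2/E^zT^2$. Writing $G_D$ for the SBM Green's function on $D$, the first moment satisfies $E^zT=\int_A G_D(z,w)\,dw$. After scaling $s\to 1$, $D$ becomes a $3$-box and $B$ a unit box at sup-distance $\ge 1/2$ from $\partial D$; combining continuity of $G_D$ with its logarithmic blow-up at $w=z$ yields a universal $c_0>0$ with $G_D(z,w)\ge c_0$ for all $z\in\bar B$, $w\in B$. Hence $E^zT\ge c_0\,\e_3 s^2$. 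For the second moment, the standard PCAF identity
$$E^zT^2=2\int_A G_D(z,w_1)\,E^{w_1}T\,dw_1$$
combined with the crude bound $E^{w_1}T\le E^{w_1}\s_D\le C s^2$ (Brownian scaling, uniform in $w_1\in D$) gives $E^zT^2\le 2Cs^2\,E^zT$. Paley--Zygmund then yields $P^z(\tau<\s)\ge E^zT/(2Cs^2)\ge (c_0/2C)\,\e_3$, and plugging this back into the previous display proves the lemma with $C_4:=c_0/(2C)$.

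I do not expect a substantive obstacle. The only technicality is that $\ss$, being a superlevel set of $z\mapsto P^z(F_r(\s_{z,s})\ge\e_1 s^2)$, need not be closed, so a priori $X_\tau\in\bar A$ rather than $A$. This is handled by standard measurability arguments (e.g.\ replacing $A$ with the Borel set of its Lebesgue density points, which is contained in $A$ and has the same measure), and does not affect any of the Green's function estimates above.
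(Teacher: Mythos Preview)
Your overall strategy coincides with the paper's: reduce via the strong Markov property to the estimate $P^z(\tau_A<\s_{z,3s})\ge c\,\e_3$ for $A=B\cap\ss$, then invoke the defining inequality of slow points. The difference is only in how this hitting estimate is obtained. The paper does it in one line via the killed heat kernel at a fixed time: with $\rho_1$ the time-$1$ heat kernel of SBM killed on exiting $[0,3]^2$ and $C_4:=\min_{w,w'\in[0.5,2.5]^2}\rho_1(w,w')$, Brownian scaling gives
\[
P^z\big(X_{s^2}\in A,\ s^2<\s_{z,3s}\big)=\int_A p^D_{s^2}(z,w)\,dw\ \ge\ \frac{C_4}{s^2}\,|A|\ \ge\ C_4\,\e_3,
\]
and this already implies $\tau_A<\s_{z,3s}$. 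Your Paley--Zygmund argument on the occupation time, with the scale-invariant Green's function lower bound and the crude second-moment bound $E^{w_1}T\le E^{w_1}\s_D\le Cs^2$, is correct and yields the same conclusion, just with a bit more work.

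One small correction to your closing remark: the set of Lebesgue density points of $A$ is \emph{not} in general contained in $A$ (only their symmetric difference is null), so the fix as stated does not quite deliver $X_\tau\in\ss$. The cleanest way to avoid the issue altogether is to apply the Markov property at the \emph{deterministic} time $s^2$ rather than at the hitting time: on $\{X_{s^2}\in A,\ s^2<\s_{z,3s}\}$ one has $B_s(X_{s^2})\subset B_{3s}(z)$ and $X_{s^2}\in\ss$, so
\[
P^z\big(F_r(\s_{z,3s})\ge\e_1 s^2\big)\ \ge\ \e_2\,P^z\big(X_{s^2}\in A,\ s^2<\s_{z,3s}\big)\ \ge\ C_4\,\e_2\,\e_3,
\]
with no measurability subtlety. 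This is in fact what the paper's heat-kernel bound is computing, even though its text speaks of the hitting time $\s_*$.
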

 \begin{proof}
Abbreviate $\s^\prime = \s_{z,3s} $.
Let
$\rho_1 (w, w^\prime)$ denote the heat kernel of the SBM, killed
upon exiting $[0,3]^2$, at time $1$. Let $C_4 := \min_{ w, w^\prime \in [0.5, 2.5]^2} \rho_1 ( w, w^\prime )$, which is positive.
Suppose that
the SBM started
from $z$ hits $B \cap \ss$ at time $\s_*$ and point $w$. Since $|B \cap \ss|
\ge \e_3 s^2$, we have that
$P^z (\s_* < \s^\prime) \ge C_4 \e_3$. On $\s_* < \s^\prime$,
$F_r^z (\s^\prime) \ge \s $, where $\s $ is the time that the
$\psi_r$-LBM started
from $w$ exits $B_s (w)$. Since $w\in \ss$,
$P^w (\s \ge \e_1 s^2) \ge \e_2$. By the
strong Markov property, $P^z (F_r (\s^\prime) \ge \e_1 s^2)
\ge P^z (\s_* < \s^\prime, \s \ge \e_1 s^2) \ge C_4 \e_3 \times \e_2$,
which
completes the proof.
  \end{proof}

\section{Lower Bound}\label{Section.lowerbound}

We continue to take
$s := 2^{-kr} = t^{\frac 1 {1 + \frac 1 2 \gamma^2}  + o (1)}$. To obtain the
lower bound on the LHK, we will force the LBM
$\{ Y^x_u \}$, started at $x\in \T$, to hit $y\in \T$
according to the following three steps. First, we will force the LBM
to hit inside $BD_r (y)$ a point which is \textit{very fast} (a notion to be defined below), then hit inside $B(y, s^{1 + \b^\prime})$
(where $\b^\prime > 0$ is a parameter to be chosen), and finally we force the LBM to
hit $y$.
We will allow time about $t/3$ for each step,
and show that these steps respectively bring factors
$e^{-s^{- (1+o(1))}}$, $s^{2 + 2 \b^\prime + o(1)}$ and $O(1)$
for the lower bound of the heat kernel. This will give the
lower bound $e^{ - s^{- (1+o(1))} } s^{2 + 2 \b^\prime + o(1)} $,
which is $\ge \exp ( - t^{- \frac 1 { 1 + \frac 1 2 \gamma^2 }  - \e } )$
as required.

The argument is naturally split according to these steps.
In Subsection~\ref{Section.LBHP}, we compute the probabilities of the
first step in Lemma~\ref{Lemma.hitlastbox} and of
the second one in Lemma~\ref{Lemma.hitsneary}, after introducing the notion
of very fast points; in that section, $r$
will be arbitrary, {\em i.e.} not tied to the value of $t$.
We pick the value of $r$ according to $t$ in Subsection~\ref{Section.LBPF},
where we will deal with the third step and show the lower bound.

\subsection{Lower bound for hitting probability} \label{Section.LBHP}

Suppose $\d > 0$, $r \ge 1$ integer, and set
 $s=2^{-kr}$.
 Take $\d_1 = s^{ 3 \delta}$, $\d_2 = s^{2 \delta}$, $\d_3 = s^{\delta}$,
 and define fast points/boxes with respect to the parameters
 $\d_1$, $\d_2$ and $\d_3$.

 \begin{lemma} \label{Lemma.hitlastbox}
There exist positive constants
$c$, $k_0 = k_0 (\d)$,
$c_0 = c_0 (k, \d)$ and $r_0 = r_0 (x,y, \gamma, \d, k )$,
not depending on $r$ but possibly depending on $k,\gamma$,
such that the following holds for $k \ge k_0$ and $r \ge r_0$.
Suppose $D$ is a random (with respect to $h$) set and $D \subset BD_r (y)$.
Let $\varsigma_1$ be the hitting time of $D$ by the LBM started from $x$.
Then, with $\P$-probability at least $1 - e^{- c_0 r} - \P ( |D| < \d_3 s^2)$,
 \begin{equation} \label{Eq.theta1}
P^x ( \varsigma_1 \le s^{1 + \frac 1 2 \gamma^2 - 4 \d - c \gamma \d } )  \ge e^{- s^{-(1+2 \d ) }} .
 \end{equation}
 \end{lemma}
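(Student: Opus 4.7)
The strategy is to realize the heuristic from the introduction. Decompose $h=\varphi_r+\psi_r$ as in~\eqref{eq-2}, and call an $s$-box $B$ \emph{good} if it is fast (in the sense of Definition~\ref{def-fast} with the chosen $\d_1=s^{3\d},\d_2=s^{2\d},\d_3=s^{\d}$) and in addition $\max_{w\in B^*}\varphi_r(w)\le \d k r \log 2$. Fastness depends only on $\psi_r$ while the coarse-field bound depends only on $\varphi_r$, so these events are independent; Lemma~\ref{Lemma.fastbox}(i) and Corollary~\ref{Cor.fluctuation} then yield $\P(B\text{ is not good})\le C' s^{\d}$ for $k\ge k_0(\d)$ large enough.

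The first main step is to construct, with $\P$-probability $\ge 1-e^{-c_0 r}$, a chain of neighboring good $s$-boxes $B_0\ni x,B_1,\dots,B_{I-1}=BD_r(y)$ of length $I\le 2^{kr(1+\d)}$. For the fastness portion of "good", Lemma~\ref{Lemma.fastpoint} provides essentially finite-range dependence (the fast indicator of $B$ depends on $\psi_r$ only in a neighborhood of $B^*$ of diameter $O(s)$), while the coarse field is globally correlated but enjoys the uniform fluctuation bound of Corollary~\ref{Cor.fluctuation}. Combining these inputs via a Peierls contour argument of the type developed in~\cite{DZ15}, one shows that no dual circuit of bad boxes separating $x$ from $BD_r(y)$ has length $\ge 2^{kr(1+\d)}$ outside an event of $\P$-probability $\le e^{-c_0 r}$; on the complement, planar duality produces the desired short chain.

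On that event I force the LBM to follow the chain one box at a time, using the strong Markov property at each hit. From any point of $B_i$, the SBM (and hence the LBM, which is a time change of it) reaches $B_{i+1}\cap\ff$ before exiting $B_{i+1}^*$ with probability $\ge c_1\d_3=c_1 s^{\d}$, by an occupation-time/Green-function estimate using $|B_{i+1}\cap\ff|\ge \d_3 s^2$ and the fact that the Green function of the SBM killed on exiting $B_{i+1}^*$ is of order unity throughout $B_{i+1}$. From any fast $z\in B_{i+1}\cap\ff$, Definition~\ref{def-fast} gives $F_r(s^2\wedge\sigma_{z,6s})\le s^{2-3\d}$ with $P^z$-probability $\ge 1-s^{2\d}$. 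Since $\varphi_r\le \d k r\log 2$ on $B_{i+1}^*$, the weight $e^{\gamma\varphi_r-\frac12\gamma^2\E\varphi_r^2}$ is bounded there by $2^{kr(\gamma\d-\gamma^2/2)}=s^{\gamma^2/2-\gamma\d}$, converting $F_r$-time into $F$-time. Each step thus costs $F$-time at most $s^{2-3\d}\cdot s^{\gamma^2/2-\gamma\d}=s^{2+\gamma^2/2-(3+\gamma)\d}$ and succeeds with probability $\ge \tfrac12 c_1 s^{\d}$. In the terminal box $BD_r(y)$, "hit $\ff$" is replaced by "hit $D$"; the same Green-function bound gives hitting probability $\ge c_1 s^{\d}$ whenever $|D|\ge \d_3 s^2$. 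Summed over $I\le s^{-(1+\d)}$ strong-Markov steps, the $F$-time to reach $D$ is at most $s^{1+\gamma^2/2-(4+\gamma)\d}$, matching the exponent in the lemma (with $c=1$), and
\[
P^x\bigl(\varsigma_1\le s^{1+\frac12\gamma^2-4\d-\gamma\d}\bigr)\ge (\tfrac12 c_1 s^{\d})^I\ge \exp\bigl(-2\d|\log s|\,s^{-(1+\d)}\bigr)\ge \exp\bigl(-s^{-(1+2\d)}\bigr),
\]
the last inequality using $|\log s|=kr\log 2\le s^{-\d}$ for $r\ge r_0(k,\d)$.

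The main technical obstacle is the contour argument producing the chain of good boxes: because $\varphi_r$ is globally correlated, one cannot simply treat the good indicator as Bernoulli percolation on $\BD_r$, and a multiscale adaptation of the Peierls estimate of~\cite{DZ15} is needed to handle the coarse-field contribution in parallel with the (essentially short-range) fast-box contribution. The remaining pieces---the Green-function-based hitting estimate for $\ff$ and $D$, and the coarse-field envelope bound converting $F_r$ to $F$---are routine.
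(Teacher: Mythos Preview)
Your approach is essentially the paper's: decompose $h=\varphi_r+\psi_r$, invoke a result from \cite{DZ15} to produce a short chain of $s$-boxes that are fast and have bounded coarse field, then push the LBM through the chain step by step via the strong Markov property, using the fast property at the starting point of each step together with the coarse-field envelope $e^{\gamma\varphi_r-\frac{\gamma^2}{2}\E\varphi_r^2}\le s^{\gamma^2/2-c\gamma\d}$ on $B_i^*$ to convert $F_r$-time to $F$-time.

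Two points where the paper is sharper than your sketch. First, rather than merging the two constraints into a single ``good'' indicator and attempting a Peierls argument (your description of which conflates absence of long dual circuits with existence of short primal paths---these are not equivalent), the paper keeps the coarse Gaussian values $\Phi=\{\varphi_r(c_B)\}$ and the fast indicators $\Xi=\{\xi_B\}$ separate and applies \cite[Theorem~1.7]{DZ15} directly to the pair $(\Phi,\Xi)$; that theorem is precisely the ``multiscale adaptation'' you allude to, and it outputs a chain with $\varphi_r(c_{B_i})\le (c-1)\d kr\log 2$ for a constant $c\ge 2$ coming from \cite{DZ15} (not $c=1$), after which the fluctuation bound of Corollary~\ref{Cor.fluctuation} extends this to $\varphi_r\le c\d kr\log 2$ on all of $B_i^*$. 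Second, the paper explicitly requires that $x$ itself is fast (an event of $\P$-probability $\ge 1-C_1 s^\d$), so that the very first step already starts at a fast point and carries the $F_r$-time bound; in your sketch the first crossing from $x$ to $B_1\cap\ff$ is left without a time control.
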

 \begin{proof}
We construct a sequence of neighboring $s$-boxes connecting $x$ and $y$, as follows. Discretize $\T$ by regarding each $B \in \BD_r$ (equivalently, its center $c_B$) as a point in $\Z^2$. We investigate the discrete Gaussian field $\Phi: = \{ \varphi_r (c_B), B \in \BD_r \}$, together with the Bernoulli process $\Xi : =\{ \xi_B, B \in \BD_r \}$ defined by $\xi_B : = 1$ if $B$ is fast. Next we will apply \cite[Theorem~1.7]{DZ15} to $(\Phi, \Xi)$. Set $N = 2^{kr}$, and correspond $B$, $\varphi_r (c_B)$, $\xi_B$ respectively to $w \in \Z^2$, $\varphi_{N, w}$, $\xi_{N, w}$ in \cite{DZ15}. Then,
 \begin{itemize}
\item $\Xi$ is independent of $\Phi$, since $\Xi$ depends on the fine field while $\Phi$ depends on the coarse field.
\item The collection of random variables $\{\xi_B \}_{B\in \BD_r}$ has finite range dependence, in particular $\xi_B$ is independent of $\xi_{B'} $ if $|c_B-c_{B'}|_\infty > 9 s$. (In the language of \cite{DZ15}, $\Xi$ is $q$-dependent for $q = 9$.)
\item $P(\xi_B = 1)$ is equal to a same value $p$ for all $B$.
 \end{itemize}
For constants $c (\geq 2),\d,r$, we introduce the event $\ee_1 = \ee_1 (c,\d,r,k)$ defined as the existence of a sequence $B_i$,
$i=1,\cdots,I$ of $s$-boxes in $\BD_r$ satisfying the following properties:
 \begin{enumerate}[(a)]
\item $\varphi_r (c_{B_i}) \le (c-1) \d k r \log 2$, $i=1,\ldots,I$.
\item $B_i$ is fast ({\em i.e.}, $\xi_{B_i} = 1$), $i=1,\ldots,I$.
\item $I \le s^{-(1+\d)}$.
\item $B_1 = BD_r (x)$, $B_I = BD_r (y)$, and
  $B_{i+1}$ is a neighbor of $B_i$,
  i.e. $|c_{B_{i+1}} - c_{B_i}| = s$, $i=1,\ldots,I-1$.
 \end{enumerate}
By Lemma~\ref{Lemma.fastbox}, $p \ge 1 - (C_1 + 1) s^{\d} \to 1$ as $r \to \infty$. In particular, $p$ is larger than $p_1$ defined in \cite[Theorem~1.7]{DZ15}, when $r \ge r_1 (\d )$. As in \cite[Theorem~1.7]{DZ15}, there exist positive constants $c (\geq 2)$, $k_0$, $\tilde c_0 = \tilde c_0 (\d)$ and
$r_2 = r_2 (x, y, \gamma, \d, k) \ge r_1$ so that,  for $k \ge k_0$ and $r \ge r_2$,
 \begin{equation}  \label{eq-pg-13}
  \P(\ee_1)
\ge 1 -  (1-p)^{1/400} - e^{- \tilde c_0 r} ,
 \end{equation}
where we use $q = 9$ and $p \to 1$ as $r \to \infty$.

 \begin{remark}
(i) The space is the torus $\T$ here, while it is a box in \cite{DZ15}. One can identify the torus as $[0,4)^2$, and consider the box $[1,3]^2$ where we locate $x$ and $y$, noting that $h(z)$ is independent of $h(w)$ if $|z-w| \ge 2$. (ii) To achieve \eqref{eq-pg-13}, it is not crucial whether one uses balls $B(x, R)$ (as in our situation) or boxes $B_{2R} (x)$ (as in \cite{DZ15}) to define $A(x,y; R)$. That is, the proof of  \eqref{eq-pg-13} is similar 
to that of \cite[Theorem~1.7]{DZ15}.
 \end{remark}

Let $\ee_2$ be the event that the following properties hold.
 \begin{enumerate}[(a$^\prime$)]
\item  $|\varphi_r (z) - \varphi_r (c_B)| \le \d k r \log 2$ for all $z \in B^*$  and $B \in \BD_r$.

\item $x$ is fast.
 \end{enumerate}
By Corollary~\ref{Cor.fluctuation}, $\P(\mbox{a}^\prime)\geq 1-e^{-r}$. By Lemma~\ref{Lemma.fastpoint},  $\P(\mbox{b}^\prime)\geq 1- C_1 \d_1/ \d_2 = C_1 2^{- k \d r}$. Take $c_0$ such that $(C_1+1)^{\frac 1 {400} }2^{- \frac {k \d} {400} r } +
e^{- \tilde c_0 r} + e^{-r } + C_1 2 ^{- k \d r} \le e^{- c_0 r}$. Then, we have
 $$
\P (\ee) \ge 1 - e^{- c_0 r} - \P ( |D| < \d_3 s^2 ), \ \mbox{where } \ee = \ee_1 \cap \ee_2 \cap  \{ |D| \ge \d_3 s^2 \} .
 $$

Next, we are going to show that \eqref{Eq.theta1} holds on $\ee$, completing the proof. Suppose $\ee$ holds. We will force the SBM to follow this sequence of boxes; to control
the LBM time, we will force also passage through
fast points, and some additional properties,
as follows.
Recall that $\{ X^x_u \}$ is the SBM starting from $x$. Construct a sequence of
hitting times $\s_i$ as follows.
Let $\s_1 = 0$. Then $X_{\s_1}^x = x \in B_1 \cap \ff$ by (b$^\prime$).
Suppose that $\s_i$ has been defined,
such that $x_i : = X^x_{\s_i} \in \cB_i \cap \ff $. Define
 $$
\s_{i+1} : = \inf\{ u \ge \s_i : X^x_u \in A \} , \ \ \ \mbox{and } \tau_i = \s_{i+1} - \s_i, \ \ \ \mbox{where } A = \left\{ \begin{array}{ll} \cB_{i+1}  \cap \ff , & \mbox{ if } i \le I-2 , \\
  D, & \mbox{ if } i = I - 1. \end{array} \right.
 $$
Informaly, $\tau_i$ is the time
it takes for the SBM to cross
$B_i$ into the next box $B_{i+1}$ and hit a fast point.

Note that (a) together with (a$^\prime$) implies that
  \begin{enumerate}[(a$^{\prime \prime}$)]
\item  For all
  $z \in \cup_i B_i^{*}$,
  $\varphi_r (z) \le c \d k r \log 2$.
 \end{enumerate}
In order to take advantage of (a$^{\prime\prime}$), we need to also control
the path of the SBM when traveling from
$x_i$ to $B_{i+1}\cap \ff$. Toward this end,
define
 $$
\tilde \s_i = \inf\{ u \ge \s_i : X^x_u \in \partial B_i^{*} \} \ \ \ \mbox{and } \tilde \tau_i = \tilde \s_i - \s_i .
 $$
Thus, $\tilde \tau_i$ is the time it
takes the SBM to exit $B^*_i$ when starting at $x_i$.
We will force the events
$\tau_i \le s^2$ and $\tau_i \le \tilde \tau_i$ to ensure that
the LBM stays inside $B_i^{*}$
and spends a short enough time to hit $B_{i+1} \cap \ff $.

Let $\rho_1 (w, w^\prime)$ denote
the heat kernel of  the SBM, killed at exiting
$[0,5]^2$, at time $1$.
Let
 \begin{equation} \label{Eq.defC5}
C_5 : = \frac 1 2 \min_{w, w^\prime \in [1,4]^2} \rho_1 (w, w^\prime) ,
 \end{equation}
which is positive. Then, for any $i \ge 1$,
 $$
P^x (\tau_i \le s^2 \le \tilde \tau_i ) \ge 2 C_5 \d_3
 $$
since on $\ee$, $| \cB_{i+1} \cap \ff | \ge \d_3 s^2$ by (b), and $|D| \ge \d_3 s^2$. Let
 $$
\hat \tau_i : = \inf \{ u \ge 0 : X_{\s_i + u}
\in \partial B_{6 s} (x_i) \} .
 $$
Recall
that $x_i$ is a fast point, $\forall i \le I-1$.  By the
strong Markov property of the $\psi_r$-LBM,
 $$
P^x (F_r (\s_i + s^2 \wedge \hat \tau_i) - F_r (\s_i) \le s^2 / \d_1) = P^{x_i} (F_r (s^2 \wedge \s_{x_i, 6 s}) \le s^2 / \d_1 ) \ge 1 - \d_2 .
 $$
Therefore,
 $$
P^x (\tau_i \le s^2 \le \tilde \tau_i , \ F_r (\s_i + s^2 \wedge \hat \tau_i) - F_r (\s_i) \le s^2 / \d_1) \ge 2 C_5 \d_3 - \d_2 \ge C_5 \d_3
 $$
for $r$ larger than $r_3 : = r_3 (x,y, \gamma, \d , k) \ge r_2$, where we used that
$\d_2 = o (\d_3)$ as $r \to \infty$.
By definition, $\tilde \tau_i \le \hat \tau_i$.
Hence, if $\tau_i \le s^2 \le \tilde \tau_i$,
we have $\tau_i \le s^2 \wedge \hat \tau_i$ thus
$F_r (\s_{i+1}) \le F_r (\s_i + s^2 \wedge \hat \tau_i) $, and by (a$^{\prime\prime}$),
 $$
F^x (\s_{i+1}) - F^x (\s_i) \le e^{\gamma c \d k r \log 2 - \frac 1 2 \gamma^2 k r \log 2}  \big( F^x_r (\s_{i+1}) - F^x_r (\s_i) \big).
 $$
Collecting the above inequalities, we have that for $i=1,\ldots,I-1$,
 \begin{equation} \label{Eq.throughabox}
P^x (F (\s_{i+1}) - F (\s_i) \le e^{\gamma c \d k r \log 2 - \frac 1 2 \gamma^2 k r \log 2} s^2 / \d_1)  \ge  C_5 \d_3 .
 \end{equation}
Finally, note that $\varsigma_1 \le \sum_{i=1}^{I-1} ( F^x (\s_{i+1}) - F^x (\s_i ) )$. By (c), \eqref{Eq.throughabox} and the strong Markov property of the LBM,
 \begin{equation} \label{Eq.LBevent}
P^x ( \varsigma_1 \le |I| e^{\gamma c \d k r \log 2 - \frac 1 2 \gamma^2 k r \log 2} s^2 / \d_1) )  \ge  (C_5 \d_3)^{|I|}  \ge e^{- s^{ -(1+ 2 \d) }}
 \end{equation}
for $r \ge r_0 \ge r_3$. Note however
that
$ |I| e^{\gamma c \d k r \log 2 - \frac 1 2 \gamma^2 k r \log 2}
s^2 / \d_1 \le s^{1 + \frac 1 2 \gamma^2 - 4 \d - c \gamma \d }$. Together with
\eqref{Eq.LBevent},
this completes the proof of the lemma.
 \end{proof}

Let $\b^\prime > 0$ be fixed. Abbreviate $B = BD_r (y)$, and set $A = B \cap B(y, s^{1 + \b^\prime} )$.
Denote by $\tau_A$ (respectively, $\tau^*$) the times that the SBM hits
$A$ (respectively,
$\partial B^{*}$). A point $z \in B$ is called {\em very fast}
if $P^z ( F_r(s^2) \le s^{2 - \d} | \tau_A \le s^2 \le \tau^* ) \ge 1 / 2$.
Let $\vff$ denote the set of very fast points. Note that
$\vff \subset B$. We would like to mention that the very fast property does not imply the fast property.

\begin{lemma} \label{Lemma.hitsneary}
(i) $\P ( |\vff| \ge \d_3 s^2 ) \ge 1 - 3 s^\d$.\\
(ii) Let $\varsigma_2$ denote
the time that the LBM hits $A$. Then,
there exists $r_1 = r_1 (\d, \gamma, k)$
such that the following holds for $r \ge r_1$.
With $\P$-probability at least $1 - 2 e^{- \frac 1 8 \d^2 k r \log 2}$,
 \begin{equation} \label{Eq.theta2}
P^z ( \varsigma_2 \le s^{2 + \frac 1 2 \gamma^2 - \d - \gamma \d } ) \ge s^{2 + 2 \b^\prime + \d }, \ \ \ \forall z \in \vff .
 \end{equation}
 \end{lemma}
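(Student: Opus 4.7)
The two parts have somewhat different flavors; the coarse field control from Corollary~\ref{Cor.fluctuation} will be the key external input for (ii), while (i) is a moment/Markov argument in the fine field alone.

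For (i), I define $p(z):=P^z(\tau_A\le s^2\le\tau^*)$ (a deterministic function of $z$) and $U(z):=P^z(F_r(s^2)>s^{2-\d},\,\tau_A\le s^2\le\tau^*)$, so that $z\in\vff$ iff $U(z)\le p(z)/2$. Since $F_r(s^2)=F_r(s^2\wedge\tau^*)$ on $\{s^2\le\tau^*\}$, one has $U(z)\le P^z(F_r(s^2\wedge\tau^*)>s^{2-\d})$, and Markov in the Brownian probability combined with the pathwise identity $\E F_r(s^2\wedge\tau^*)=s^2\wedge\tau^*$ gives $\E U(z)\le s^\d$. A second Markov in $h$ then yields $\P(z\notin\vff)\le 2s^\d/p(z)$. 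Restricting to the neighborhood $D:=B\cap B(y,s^{1+\b'/2})$ of $y$, on which the explicit form of harmonic measure gives a constant lower bound $p(z)\ge c(\b')>0$ and $|D|\gtrsim s^{2+\b'}\gg\d_3 s^2$ (provided $\b'<\d$, which can be arranged), integration and a final Markov inequality produce $\P(|D\cap\vff|\ge\d_3 s^2)\ge 1-O(s^\d)$, giving the stated bound for $s$ small.

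For (ii), let $\ee':=\{\max_{x\in B^*}\varphi_r(x)\le\d kr\log 2\}$; by Corollary~\ref{Cor.fluctuation}, $\P(\ee')\ge 1-2\exp(-\d^2 kr\log 2/8)$. On $\ee'\cap\{\tau_A\le s^2\le\tau^*\}$ the SBM path $\{X_u:u\le\tau_A\}$ stays in $B^*$, so by independence of $\varphi_r,\psi_r$ and the identity $\E\varphi_r^2\equiv kr\log 2$,
$$F(\tau_A)\le s^{\gamma^2/2-\gamma\d}\,F_r(\tau_A)\le s^{\gamma^2/2-\gamma\d}\,F_r(s^2).$$
The very-fast property then gives, for every $z\in\vff$,
$$P^z\bigl(\varsigma_2\le s^{2+\gamma^2/2-\d-\gamma\d}\bigr)\ge P^z\bigl(F_r(s^2)\le s^{2-\d},\,\tau_A\le s^2\le\tau^*\bigr)\ge\tfrac{1}{2}p(z),$$
and the uniform lower bound $p(z)\ge c/|\log s|\gg s^{2+2\b'+\d}$ (a standard 2D Brownian hitting estimate, valid for all $z\in B$ since $z$ lies at distance $\gtrsim s$ from $\partial B^*$) completes the proof on~$\ee'$.

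The main obstacle appears in (i): the uniform bound $p(z)\gtrsim 1/|\log s|$ on all of $B$ only yields $\P(|\vff|\ge\d_3 s^2)\ge 1-O(s^\d|\log s|)$, losing a logarithmic factor. The remedy is to run the Markov argument over the intermediate-scale neighborhood $D=B\cap B(y,s^{1+\b'/2})$ of $y$, where $p(z)$ is bounded below by a genuine constant while $|D|$ is still large enough to host the required density $\d_3 s^2$ of very-fast points; in (ii), the crude logarithmic lower bound is already more than enough since we only need $p(z)\gg s^{2+2\b'+\d}$.
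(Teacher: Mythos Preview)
Your argument for (ii) is correct and essentially identical to the paper's; the only cosmetic difference is in the lower bound on $p(z):=P^z(\tau_A\le s^2\le\tau^*)$: the paper uses the crude heat-kernel estimate $p(z)\ge 2C_5|A|/s^2\gtrsim s^{2\b'}$, while you invoke the 2D hitting bound $p(z)\gtrsim 1/|\log s|$. Both are far stronger than the required $s^{2+2\b'+\d}$.

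The gap is in (i). The constraint $\b'<\d$ \emph{cannot} be arranged: in the application (Section~\ref{Section.LBPF}) one takes $\b'=(1+\tfrac12\gamma^2-4\d-c\gamma\d)\,\b$, where $\b=\b(\gamma,k)$ comes from Lemma~\ref{Lemma.nearneighbor} and is not at our disposal, while $\d$ is chosen small depending on $\e$. Thus $\b'$ is typically of order $1$ and $\d$ is tiny, so your intermediate neighborhood $D=B\cap B(y,s^{1+\b'/2})$ has $|D|\asymp s^{2+\b'}\ll \d_3 s^2=s^{2+\d}$, and the density argument collapses.

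The fix is elementary and is precisely what the paper does: do not discard the Brownian indicator. Since $\{\tau_A\le s^2\le\tau^*\}$ is measurable in the Brownian path alone, Fubini gives
\[
\E_h\,\eta \;=\; E^z\!\bigl[\,\P_h\bigl(F_r(s^2)>s^{2-\d}\bigr)\ \big|\ \tau_A\le s^2\le\tau^*\bigr],
\]
where $\eta=P^z(F_r(s^2)>s^{2-\d}\mid\tau_A\le s^2\le\tau^*)$. Now $\E_h F_r(s^2)=s^2$ holds pathwise, so $\P_h(F_r(s^2)>s^{2-\d})\le s^\d$ for \emph{every} Brownian realization, and the conditional expectation is $\le s^\d$ as well. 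Hence $\P(z\notin\vff)\le 2\E_h\eta\le 2s^\d$ uniformly over $z\in B$, with no reference to $p(z)$ whatsoever. In your notation this amounts to keeping the indicator to obtain the sharper bound $\E_h U(z)\le s^\d\,p(z)$, which cancels exactly with the $p(z)/2$ in the Markov denominator. Then $\E|\vff|\ge(1-2s^\d)s^2$ and one more Markov step yields $\P(|\vff|\ge\d_3 s^2)\ge 1-2s^\d-\d_3=1-3s^\d$, valid for every $\b'>0$.
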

 \begin{proof}
The proof of (i) is parallel to Lemma~\ref{Lemma.fastbox}(i) combined with Lemma~\ref{Lemma.fastpoint}(i), while that of (ii) is parallel to \eqref{Eq.throughabox}.

(i) Set $\xi = F_r^z (s^2) $ and
$\eta = P^z ( \xi > s^{2-\d} | \tau_A \le s^2 \le \tau^* )$.
By a proof similar to that of  Lemma~\ref{Lemma.fastpoint}(i),
$\P (z \notin \vff ) = \P (\eta > 1/2)  \le 2 \E \eta = 2 E^z \big( \P(\xi > s^{2-\d}) | \tau_A \le s^2 \le \tau^* \big) \le 2 s^\d$ since $\P (\xi > s^{2 - \d }) \le s^{\d -2} \E \xi = s^\d$, for all $z \in B$.
Then, $(1 - 2 s^{\d} ) s^2 \le \E |\vff| \le s^2 \P (|\vff| \ge \d_3 s^2)+ \d_3 s^2$, {\em i.e.} $ \P (|\vff| \ge \d_3 s^2) \ge 1 - 2 s^\d - \d_3 = 1 - 3 s^\d$, where we recall that $\d_3 = s^\d$. 

(ii) For any $z \in \vff$,
 $$
P^z (F_r (s^2) \le s^{2-\d},
\tau_A \le s^2 \le \tau^*  ) \ge \frac 1 2 P^z ( \tau_A \le s^2 \le \tau^*  ).
 $$
With $C_5$ defined in \eqref{Eq.defC5}, we have $P^z ( \tau_A \le s^2 \le \tau^*  )  \ge 2 C_5 |A| \ge 2 C_5 \times \frac 1 4 \pi  s^{2(1 + \b^\prime)}$. It follows that, for $r$ large enough,
$$  P^z (F_r (s^2) \le s^{2 - \d}, \tau_A \le s^2 \le \tau^*  ) \ge
\frac {C_5 \pi}{4} s^{2 + 2 \b^\prime} \ge s^{2 + 2 \b^\prime + \d}.$$
By Corollary~\ref{Cor.fluctuation}, with probability $\ge 1 - 2 e^{- \frac 1 8 \d^2 k r \log 2}$, we have $\varphi_r (w) \le \d k r \log 2$ for all $w \in B^*$. On this event,
 $$
\{ F_r (s^2) \le s^{2-\d}, \tau_A \le s^2 \le \tau^* \}
\Rightarrow \{\varsigma_2^z \le e^{\gamma \d k r \log 2 - \frac 1 2 \gamma^2 k r \log 2} s^{2 - \d}\}
 $$
for all $z \in B$. Noting that
$e^{\gamma \d k r \log 2 - \frac 1 2 \gamma^2 k r \log 2} s^{2 - \d}= s^{2 + \frac 1 2 \gamma^2- \d - \gamma \d}$ completes the proof.
 \end{proof}

\subsection{Proof of the lower bound in \eqref{eq-main1}} \label{Section.LBPF}
We take
 $$
r_t = \lceil - \frac {\log t - \log 3}{(1 + \frac 1 2 \gamma^2 - 4 \d - c \gamma \d) k \log 2 } \rceil,
 $$
and set $s = 2^{- k r_t}$ so that
 \begin{equation} \label{Eq.tands}
2^{-k} (t/3)^{\frac 1 { 1 + \frac 1 2 \gamma^2 - 4 \d - c \gamma \d }} < s \le (t/3)^{\frac 1 { 1 + \frac 1 2 \gamma^2 - 4 \d - c \gamma \d }} .
 \end{equation}

 The following lemma is a straight forward adaptation of \cite[Corollary 5.20]{MRVZ14}. We omit the details.
 \begin{lemma} \label{Lemma.nearneighbor}
There exists a constant $\b  = \b (\gamma, k)$ and a positive
random variable $ U_0 = U_0 (\gamma, k; h)$ such that for all
$u \le U_0$,
 $$
\inf_{ z\in \T} \ \inf_{w\in \T, |w-z| \le u^\b} p_u^\gamma (z,w) \ge 1.
 $$
 \end{lemma}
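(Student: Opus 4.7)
The plan is to adapt the strategy of \cite[Corollary~5.20]{MRVZ14} to the $k$-coarse MBRW setting, relying on ingredients already available from the preceding sections: the joint continuity of $p_u^\gamma$ noted at the end of Subsection~\ref{Section.LBM}, the power-law spectrum estimate \eqref{Eq.COVpowelaw}, and the translation invariance of $h$ on $\T$. Morally, for $u$ small the LBM started at $z$ is concentrated in a ball of tiny $\mu^\gamma$-mass, so the transition density against $\mu^\gamma$ must be large there; the task is to extract this quantitatively, uniformly in $z$, and propagate it to nearby $w$.

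Step 1 (On-diagonal lower bound via Cauchy--Schwarz). By Chapman--Kolmogorov and reversibility,
$$p_{2u}^\gamma(z,z) = \int_\T p_u^\gamma(z,w)^2 \, \mu^\gamma(dw) \ge \frac{\bigl(P^z(Y_u \in B(z,\rho_u))\bigr)^2}{\mu^\gamma(B(z,\rho_u))}$$
for any radius $\rho_u$. If one can choose $\rho_u$ so that the probability in the numerator is $\ge 1/2$, this yields $p_{2u}^\gamma(z,z) \ge 1/(4\mu^\gamma(B(z,\rho_u)))$.

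Step 2 (Non-exit estimate and uniform volume bound). For $\rho_u = u^{\beta_1}$ with $\beta_1$ slightly larger than $1/2$, I would bound the LBM clock $F(\sigma_{z,\rho_u})$ from below by a second-moment/Paley--Zygmund argument in the spirit of the estimates leading to Lemma~\ref{Lemma.fastpoint}(ii), giving $P^z(Y_u \in B(z,\rho_u)) \ge 1/2$ uniformly in $z$. In parallel, from \eqref{Eq.COVpowelaw} with $q$ close to $4/\gamma^2$ and any $\alpha < \xi(q)/q$, Markov's inequality gives
$$\P\bigl(\mu^\gamma(B(z,\epsilon)) \ge \epsilon^\alpha\bigr) \le C(q)\, \epsilon^{\,\xi(q) - q\alpha}.$$
Summing over dyadic scales $\epsilon = 2^{-n}$ and over centers $z$ on a $2^{-n}$-net (using translation invariance of $h$), Borel--Cantelli yields $\mu^\gamma(B(z,\epsilon)) \le \epsilon^{\alpha}$ simultaneously for all $z \in \T$ and all $\epsilon \le \epsilon_0(h)$. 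Combined with Step~1, this gives $p_{2u}^\gamma(z,z) \ge u^{-\alpha \beta_1}/4$ uniformly in $z$, for $u$ small.

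Step 3 (From on-diagonal to near-diagonal via continuity). The continuity of $p_u^\gamma(z,w)$ in $(z,w)$, together with a polynomial modulus of continuity (extractable from the $L^2$ Dirichlet-form construction recalled in Subsection~\ref{Section.LBM} and the power-law bound \eqref{Eq.COVpowelaw}), allows one to pick $\beta = \beta(\gamma,k)$ large enough compared to $\beta_1$ and the continuity exponent so that $|p_u^\gamma(z,w) - p_u^\gamma(z,z)| \le p_u^\gamma(z,z)/2$ whenever $|z-w| \le u^\beta$. This yields $p_u^\gamma(z,w) \ge u^{-\alpha \beta_1}/8 \ge 1$ for $u$ small enough, which gives the claimed bound.

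The main obstacle is obtaining all of the above estimates with a single $\P$-almost sure threshold $U_0(h)$ uniformly in $z \in \T$. The translation invariance of $h$ reduces each pointwise statement to a reference point, but then one must combine dyadic-net covers with Borel--Cantelli arguments (analogous in spirit to those in Section~\ref{Section.finefield}) both for the volume lower bound on $\mu^\gamma$-mass of small balls and for the exit-time estimate of Step~2, and separately verify that the quantitative continuity exponent of $p_u^\gamma$ is uniform over $\T \times \T$ as $u \to 0$.
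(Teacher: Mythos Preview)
Your proposal coincides with the paper's approach: the paper's entire proof of this lemma is the sentence ``a straightforward adaptation of \cite[Corollary~5.20]{MRVZ14}; we omit the details,'' and your sketch is precisely that adaptation (on-diagonal lower bound via Cauchy--Schwarz and a small-ball volume estimate from \eqref{Eq.COVpowelaw}, then propagation to the near-diagonal via the continuity established at the end of Subsection~\ref{Section.LBM}). One minor slip worth fixing: in Step~2 you want $\beta_1$ slightly \emph{smaller} than $1/2$, not larger, so that $\rho_u = u^{\beta_1} \gg u^{1/2}$ and the Paley--Zygmund lower bound on $F(\sigma_{z,\rho_u})$ (whose mean is of order $\rho_u^{\,2}$) actually yields $P^z(F(\sigma_{z,\rho_u}) > u)$ bounded away from zero.
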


Set $\b^\prime = (1 + \frac 1 2 \gamma^2 - 4 \d - c \gamma \d) \b$. By \eqref{Eq.tands}, $ \ell : = s^{1 + \b^\prime} \le s^{\b^\prime} \le s^{(1 + \frac 1 2 \gamma^2 - 4 \d - c \gamma \d) \b }  \le (t/3)^\b$. Let $\varsigma$ be the time the LBM hits the small ball $B(y, \ell)$. On the event $\varsigma \le 2t/3$, $u: = t-\varsigma \ge t/3$. It follows $\ell \le u^\b$. Consequently, by strong Markov property and Lemma~\ref{Lemma.nearneighbor}, it follows
 \begin{equation} \label{Eq.LBhtprob}
p_t^\gamma (x,y) \ge P^x (\varsigma \le 2 t/3) , \ \ \ \forall t \le U_0.
 \end{equation}

Next, we estimate $P^x ( \varsigma \le 2 t/3)$. We follow the notations in  Lemma~\ref{Lemma.hitlastbox} and Lemma~\ref{Lemma.hitsneary}. Define very fast points with respect to the parameter $\b^\prime$, and take $D$ as $\vff$. Then, for any $r \ge r_0 \vee r_1$,  \eqref{Eq.theta1} and \eqref{Eq.theta2} hold simultaneously, with probability $1 - e^{- c_0 r} - 3 s^\d - 2 e^{- \frac 1 8 \d^2 k r \log 2}$. Note that
$t \to 0$ is equivalent to $r_t \to \infty$. By the
Borel-Cantelli Lemma, we can find $T_0 = T_0 (x,y, \gamma, \e, k; h)<U_0 $ such that for all $t \le T_0$,
both \eqref{Eq.theta1} and \eqref{Eq.theta2} hold for $r = r_t$,
and furthermore
 \begin{equation} \label{Eq.tands1}
e^{- s^{-(1+ 2 \d )}}  s^{2 + 2 \b^\prime + \d }  \ge \exp \( - t^{ - \frac 1 { 1 + \frac 1 2 \gamma^2 }  - \e } \)
 \end{equation}
where we take $\d$ (according to $\e$) such that $\frac {1 + 2 \d }{1+\frac 1 2 \gamma^2 - 4 \d - c \gamma \d} < \frac 1 {1 + \frac 1 2 \gamma^2} + \e$.
By the
strong Markov property, $P^x (\varsigma \le 2t/3) \ge P^x (\varsigma_1 \le t/3) \min_{z \in \vff} P^z (\varsigma_2 \le t/3) \ge e^{- s^{-(1+3\d )}}  s^{2 + 2 \b^\prime + \d }$. This, together with \eqref{Eq.LBhtprob} and \eqref{Eq.tands1}, gives the lower bound
in \eqref{eq-main1}.
\qed

\section{Proof of the upper bound in \eqref{eq-main1}}
\label{Section.upperbound}

We begin with the following lemma, whose proof is a slight adaptation of
that of \cite[Theorem 4.2]{MRVZ14}. We omit further details of the proof.
 \begin{lemma} \label{Lem.coarseub2}
For any $\e>0$ there exist $\b = \b (\e, \gamma, k) > 0$ and
positive random constants $c_1 = c_1 (h)$ and $c_2 = c_2 (h)$ such that,
for all $z,w\in \T$ and $u>0$,
 $$
p_u^\gamma (z, w) \le \frac {c_1 }{u^{1+\e}} \exp \( - c_2   \( \frac {| z-w |}{u^{1/\b }} \) ^{\frac {\b }{\b -1} } \) .
 $$
 \end{lemma}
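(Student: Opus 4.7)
The plan is to follow the proof strategy of \cite[Theorem 4.2]{MRVZ14}, adapted to the $k$-coarse MBRW setting by exploiting the bounded-perturbation identity \eqref{Eq.covariance} and the power-law moment bound \eqref{Eq.COVpowelaw}. The proof naturally splits into an on-diagonal ultracontractivity estimate, a sub-Gaussian exit-time tail, and a Davies-type patching.

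First I would establish a $\P$-a.s. on-diagonal estimate of the form $p_u^\gamma(z,z) \le c_1(h)\, u^{-(1+\e/2)}$, uniformly for small $u$ and all $z\in\T$. The input is the power-law spectrum \eqref{Eq.COVpowelaw}: the bound $\E\mu^\gamma(B(0,\e))^q \le C(q)\e^{\xi(q)}$ for $q\in[0,4/\gamma^2]$ translates (via Markov's inequality and a Borel--Cantelli argument over dyadic radii) into a deterministic volume doubling estimate $\mu^\gamma(B(z,r)) \ge c(h) r^{2+\gamma^2/2+\e/4}$ for $r$ small. Inserting this into a Nash inequality for the Liouville Dirichlet form, exactly as in the derivation of \cite[Prop.~3.5]{GRV14} or \cite[Thm.~4.2]{MRVZ14}, yields the claimed ultracontractive bound.

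Second I would obtain a sub-Gaussian modulus-of-continuity estimate for the LBM. Concretely, using the coarse/fine decomposition \eqref{eq-2} with an appropriate scale $r$ chosen from $u$, I would show the existence of $\beta=\beta(\e,\gamma,k)>0$ such that
\[
P^z\!\left(\sup_{s\le u}|Y_s - z|\ge r\right)
\le c_3 \exp\!\left(-c_4\!\left(\frac{r}{u^{1/\beta}}\right)^{\beta/(\beta-1)}\right).
\]
The one-step input is an estimate of the form $P^z(F(\sigma_{z,r})\le cu)\le 1/2$ uniformly in $z$ for $u\le c'r^\beta$, which is proved by combining the moment computations of the fine-field PCAF from the proof of Lemma~\ref{Lemma.fastpoint}(ii) with the coarse-field maximum bound in Corollary~\ref{Cor.fluctuation} to control the multiplicative shift $e^{\gamma\varphi_r-\gamma^2\E\varphi_r^2/2}$ between $F$ and $F_r$. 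Iterating the one-step bound by the strong Markov property and summing a geometric series in dyadic annuli produces the Gaussian-type tail.

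Finally I would combine the two ingredients. By Chapman--Kolmogorov with respect to $\mu^\gamma$,
\[
p_u^\gamma(z,w)=\int_\T p_{u/2}^\gamma(z,\xi)\,p_{u/2}^\gamma(\xi,w)\,\mu^\gamma(d\xi),
\]
and splitting the $\xi$-integral according to whether $|\xi-z|$ or $|\xi-w|$ is at least $|z-w|/2$, the probability that the LBM spans such a distance in time $u/2$ is controlled by the exit-time tail of the previous step. Combining with the on-diagonal bound on the other factor and choosing the small parameter $\e/2$ gives the stated estimate.

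The main obstacle is Step~2: the exit-time bound must be uniform in $z\in\T$ and hold simultaneously at all scales $r$ with an explicit power $\beta$. The coarse-field fluctuation estimate in Corollary~\ref{Cor.fluctuation} only gives a pointwise-in-scale control with a union bound cost, so one must carry out a multiscale chaining (akin to Kolmogorov's continuity criterion combined with Borel--Cantelli) to convert the family of scale-dependent good events into a single $\P$-a.s. event on which the sub-Gaussian exit tail holds for all sufficiently small $u$. This is precisely the technical core of \cite[Thm.~4.2]{MRVZ14}, and in our setting it is slightly easier because the bounded perturbation $\lambda$ in \eqref{Eq.covariance} only produces multiplicative constants of order $e^{O(k\gamma)}$, which can be absorbed into $c_1,c_2$.
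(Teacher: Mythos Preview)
Your approach is essentially the paper's: the paper does not give a proof at all but simply states that the lemma ``is a slight adaptation of \cite[Theorem 4.2]{MRVZ14}'' and omits all details, which is exactly the strategy you outline (on-diagonal bound, sub-Gaussian exit tail, Davies patching). So at the level of the paper's own argument there is nothing to compare.

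One technical slip in your sketch is worth flagging. You write that the moment upper bound \eqref{Eq.COVpowelaw}, namely $\E\mu^\gamma(B(0,r))^q\le C(q)r^{\xi(q)}$ for $q\ge 0$, ``translates via Markov's inequality and Borel--Cantelli'' into a volume \emph{lower} bound $\mu^\gamma(B(z,r))\ge c(h)r^{2+\gamma^2/2+\e/4}$. Markov's inequality applied to positive moments controls $\P(\mu^\gamma(B)>\lambda)$, which yields an a.s.\ \emph{upper} bound on ball masses, not a lower bound. For the on-diagonal estimate one needs the opposite direction; this requires either negative moment bounds on $\mu^\gamma(B)$ or a second-moment/Paley--Zygmund argument (both available for $\gamma<2$ and indeed used in \cite{GRV14,MRVZ14}). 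The fix is routine and the rest of your outline stands, but the implication as written goes the wrong way.
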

 We turn to the proof of the upper bound in \eqref{eq-main1}.
Fix $\a$ such that
  $$
\a > 1 \  \mbox{ and } \ (\frac \a {\b } - 2) \frac {\b }{\b - 1} \ge \frac 1 {1 + \frac 1 2 \gamma^2} \ ,
 $$
and set $u = t^\a$ in Lemma~\ref{Lem.coarseub2}. Then,
for $z \notin B(y, t^2)$,
 $$
p_{t^\a}^\gamma (z, y) \le \frac {c_1 }{t^{\a (1+\e) }} \exp \( - c_2   \( \frac {t^2}{t^{\a /\b }} \) ^{\frac {\b}{\b -1} } \) \le \frac {c_1 }{t^{\a (1+\e)}} \exp \( - c_2 t^{- \frac 1 {1 + \frac 1 2 \gamma^2 }} \) \le \exp \( - t^{- \frac 1 {1 + \frac 1 2 \gamma^2 } + \frac 1 2 \e } \) ,
 $$
where the last inequality holds for $t$ smaller than some
$T_1 (\gamma, \e, k, h)$. It follows that
 \begin{equation} \label{Eq.upoffsmallball}
\int_{|z - y| \ge t^2}  p_{t -t^\a}^\gamma (x,z) p_{t^\a}^\gamma (z,y)
\mu^\gamma (d z)
 \le
\exp ( - t^{- \frac 1 {1 + \frac 1 2 \gamma^2} + \frac 1 2 \e  }) .
  \end{equation}
On the other hand, again from
Lemma
 \ref{Lem.coarseub2},
$p_{t^\a}^\gamma (z,y) \le \frac {c_1} {t^{\a (1+\e) }}$ for all $z$. Thus,
 $$
\int_{|z - y| < t^2}  p_{t -t^\a}^\gamma (x,z) p_{t^\a}^\gamma (z,y) \mu^\gamma (d z) \le \frac {c_1} { t^{ \a (1+\e)} }
P^x \( | Y_{t - t^\a} - y | < t^2 \) .
 $$

Assume
  $t^2 \le |x-y| / 2$ and
 set $$
\varsigma : = \inf \{ u \ge 0 : Y^x_u \notin B(x, |x-y|/2 ) \} .$$
 Note that $ \{| Y_{t - t^\a} - y | < t^2 \}\Rightarrow \{\varsigma \le t\} $.
In Lemma~\ref{Lemma.hitprobub} below, we will show
 \begin{equation} \label{Eq.UBexitprob}
P^x (\varsigma \le  t ) \le \exp \( - t^{ - \frac {1}{1 + \frac 1 2 \gamma^2} + \frac 1 2 \e  } \)
 \end{equation}
for $t$ smaller than some $T_2 (\gamma, k, \e; h)$. It then follows that
 $$
\int_{|z - y| < t^2}  p_{t -t^\a}^\gamma (x,z) p_{t^\a}^\gamma (z,y) \mu^\gamma(d z) \le \frac {c_1} { t^{\a (1+ \e )}} \exp ( - t^{- \frac 1 {1 + \frac 1 2 \gamma^2} + \frac 1 2 \e  } ) .
 $$
Combining the above inequality with \eqref{Eq.upoffsmallball}, we conclude that
 \begin{eqnarray*}
p_t^\gamma (x,y) & = & \int p_{t -t^\a}^\gamma (x,z) p_{t^\a}^\gamma (z,y) \mu^\gamma (d z)
 \\ & = &
\int_{|z - y| < t^2}  p_{t -t^\a}^\gamma (x,z) p_{t^\a}^\gamma (z,y) \mu^\gamma (d z) + \int_{|z - y| \ge t^2}  p_{t -t^\a}^\gamma (x,z) p_{t^\a}^\gamma (z,y) \mu^\gamma (d z)
 \\ & \le &
(1 + \frac {c_1}{t^{\a (1+\e)}}) \exp (t^{- \frac 1 {1 + \frac 1 2 \gamma^2} + \frac 1 2 \e })  \le  \exp (t^{- \frac 1 {1 + \frac 1 2 \gamma^2} + \e })
 \end{eqnarray*}
for $t $ less than some $T_0$. This completes
the proof of
the upper bound in \eqref{eq-main1}, modulu the proof of
Lemma \ref{Lemma.hitprobub}.
\qed

 \begin{lemma} \label{Lemma.hitprobub}
There exists $k_0 = k_0 (\e)$ and a random variable $T_2=T_2(\gamma,k,\e;h)$
such that, for all  $k \ge k_0$ and $t<T_2$,
\eqref{Eq.UBexitprob} holds, $\P$-a.s.
  \end{lemma}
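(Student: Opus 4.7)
The plan is to mirror the lower-bound argument of Section~\ref{Section.lowerbound} but with slow boxes playing the role of obstacles: I will surround $x$ by many disjoint concentric annuli of slow $s$-boxes inside $B(x,|x-y|/2)$, so that to exit this ball the LBM must cross each annulus, and each crossing takes a nontrivial amount of $F_r$-time with probability bounded away from $0$. Choose $r=r_t$ so that $s:=2^{-kr_t}$ satisfies $t=s^{1+\gamma^2/2-O(\d)}$ for $\d=\d(\e)$ small, and pick $\e_1,\e_2,\e_3$ (small polynomial in $s$) so that Lemma~\ref{Lemma.fastbox}(ii) applies and the probability of a box failing to be slow is a large negative power of $s$.

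Step one is a planar percolation/duality argument in the spirit of \cite[Theorem~1.7]{DZ15}: by Lemma~\ref{Lemma.fastbox}(ii) the Bernoulli field $\{B\text{ slow}\}_{B\in\BD_r}$ is $9$-dependent with success probability $1-o(s^c)$, so with $\P$-probability at least $1-e^{-c_0r}$ one can find $N\ge s^{-1-c\d}$ disjoint concentric annuli $A_1,\dots,A_N$ of slow $s$-boxes, each contained in $B(x,|x-y|/2)$ and separating $x$ from its boundary, with the annuli spaced by at least $9s$ so that the slow-property events in different annuli are independent. Step two is coarse-field control: by Corollary~\ref{Cor.fluctuation}, with high probability $\varphi_r(z)\ge -\d kr\log 2$ for all $z\in B(x,|x-y|/2)$, which gives the pointwise bound $F\ge e^{-\gamma\d kr\log 2-\gamma^2 kr\log 2/2}F_r$ along every SBM path we consider. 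Step three is the time accumulation: by the strong Markov property of the $\psi_r$-LBM and Lemma~\ref{Lemma.claim2} applied to any slow box entered while traversing $A_i$, the conditional probability that the $F_r$-crossing time of $A_i$ is less than $\e_1 s^2$ is at most $1-C_4\e_2\e_3=:1-\alpha$. Iterating over the $N$ annuli and using their independence, the probability that the $\psi_r$-LBM exits $B(x,|x-y|/2)$ in $F_r$-time less than $N\e_1 s^2$ is at most $e^{-\alpha N}$. Finally, on the good coarse-field event the inequality $\{\varsigma\le t\}$ forces the $\psi_r$-LBM to exit in $F_r$-time at most $t\cdot s^{-\d\gamma-\gamma^2/2}$; choosing $N=\lceil t\e_1^{-1}s^{-2-\d\gamma-\gamma^2/2}\rceil$ makes this smaller than $N\e_1 s^2$, so the exit event implies all $N$ annular crossings were fast. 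Substituting $t=s^{1+\gamma^2/2-O(\d)}$ shows $N\ge s^{-1-O(\d)}$, and picking $\d$ small enough (depending on $\e$) gives $\alpha N\ge t^{-1/(1+\gamma^2/2)+\e/2}$, yielding \eqref{Eq.UBexitprob}. The random threshold $T_2$ is then produced by Borel–Cantelli applied across the discrete values $r=r_t\to\infty$.

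The main obstacle is the percolation-duality argument of step one: \cite[Theorem~1.7]{DZ15} produces a connecting path, whereas here one needs the dual statement, namely the simultaneous existence of $s^{-1-c\d}$ blocking annuli of slow boxes surrounding $x$, with failure probability $e^{-c_0 r}$ that still dominates the other error terms. One must also verify that the $9$-dependence of the slow property is genuinely enough to treat successive annular crossings as independent Bernoulli trials in step three, which is where the spacing of the annuli matters. A secondary technicality is ensuring that the coarse-field lower bound from Corollary~\ref{Cor.fluctuation}, which is stated for maxima over $s$-box stars, can be upgraded to a uniform lower bound on the entire macroscopic ball $B(x,|x-y|/2)$ by a union bound that still behaves acceptably when combined with the bound from step three.
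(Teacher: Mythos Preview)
Your Step two is the fatal gap, not a secondary technicality. Corollary~\ref{Cor.fluctuation} controls the oscillation $M^f=\max_i\sup_{z\in B_i^*}|\varphi_r(z)-\varphi_r(c_{B_i})|$ uniformly, but its bound on $M_i=\max_{z\in B_i^*}\varphi_r(z)$ is for a \emph{single} box, with tail $2e^{-\d^2 kr\log 2/8}$. A union bound over the $2^{2kr+O(1)}$ boxes in a macroscopic ball diverges; indeed the minimum of the log-correlated field $\varphi_r$ over any macroscopic region is of order $-2kr\log 2$, so for small $\d$ the uniform lower bound $\varphi_r\ge -\d kr\log 2$ on $B(x,|x-y|/2)$ simply fails $\P$-a.s. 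Without it you cannot convert $F_r$-time to $F$-time along the SBM path, since the SBM may cross your slow annuli precisely where the coarse field is very negative. Restricting the coarse-field control to the annuli does not help: they still contain order $s^{-2}$ boxes, and the events $\{\varphi_r(c_B)\ge -\d kr\log 2\}$ are themselves log-correlated, so neither a union bound nor a standard Bernoulli-percolation circuit argument applies.

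The paper does not separate the two controls. It follows the SBM's own discrete path (successive exits from $B_i^*$) and invokes the companion result \cite[Theorem~1.5]{DZ15} --- not Theorem~1.7, which was the tool for the lower bound --- and this is precisely the dual first-passage statement you were seeking: with $\P$-probability $\ge 1-e^{-\tilde c_0 r}$, \emph{every} discrete path from $x$ to $\partial B(x,|x-y|/4)$ contains at least $I\ge s^{-(1-\d)}$ boxes that are simultaneously slow \emph{and} satisfy $\varphi_r(c_{B_{i_j}})\ge -(c-1)\d kr\log 2$. The Gaussian constraint is absorbed into the percolation input, and one only uses the oscillation part of Corollary~\ref{Cor.fluctuation} (which \emph{is} uniform) to pass from centers to $B_{i_j}^*$. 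Two smaller issues worth noting: the paper takes $\e_2=C_3 e^{-6k\gamma^2}$ and $\e_3=C_3^2 e^{-12k\gamma^2}$ as constants in $(k,\gamma)$, not polynomials in $s$ --- otherwise $\alpha=C_4\e_2\e_3$ would be polynomially small and $\alpha N$ would fall short of $s^{-1+O(\d)}$; and $N\ge s^{-1-c\d}$ disjoint $s$-wide annuli cannot fit inside a ball of fixed radius, so at best $N=O(s^{-1})$.
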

 \begin{proof}
The proof is similar to that of Lemma~\ref{Lemma.hitlastbox}.
We will discretize $\T$ using $\BD_r$, and show that for $\d > 0$ and $k$ large enough,
 \begin{equation} \label{Eq.ubrlarge}
P^x ( \varsigma \le  2^{- kr( 1 + \frac 1 2 \gamma^2  + 3 \d + c \gamma \d )} ) \le e^{-2^{kr(1-2\d)}}
 \end{equation}
for all $r \ge r_0 (\gamma, k, \d ; h)$, $\P$-a.s., where $c>0$ is a constant. Then, we will pick a proper $\d$ (according to $\e$) and a proper $r$ (according to $t$), to obtain the lemma.

We begin by discretizing $\T$, fixing
$r \ge 1$ and $s=2^{-kr}$. We identify each $B \in \BD_r$
(equivalently, its center $c_B$) as a point in $\Z^2$ in the natural
way.
We next define inductively
the discrete path associated with the path
$\{ X_u : u \le \tilde \varsigma \}$, where $\{ X_u \}$ is
the SBM starting from $x$ and $\tilde \varsigma$ is the time
$\{ X_u \}$ hits $\partial B(x, \frac 1 4 |x-y|)$. We use the radius $\frac 1 4 |x-y|$ rather than $\frac 1 2 |x-y|$ for the convenient that we do not involve the last point in the discrete path (defined below) to $\partial B(x, \frac 1 2 |x-y|)$.

Let $\tau_1 = 0$. Suppose $\tau_i$ has been defined.
Set $B_i := BD_r (X_{\tau_i})$. Then, define
$$\tau_{i+1} : = \inf \{ u \ge \tau_i : X_u \in \partial B_i^{*} \}.$$
This procedure stops naturally when $\tau_{i+1}$ cannot be defined.
We call this sequence of $B_i$'s a discrete path from $x$ to $\partial B(x, \frac 1 4 |x-y|)$.

Next, set $\e_1 := s^\d $, $\e_2 := C_3 e^{- 6 k \gamma^2}$, $\e_3 := C_3^2 e^{- 12 k \gamma^2}$, and define slow points/boxes with respect to $\e_1$, $\e_2$ and $\e_3$. Set
$\xi_B : = {\bf 1}_{B \ \mbox{is slow}}$. We study the discrete Gaussian field $\Phi = \{ \varphi_r (c_B), B \in \BD_r \}$ and the Bernoulli process $\Xi = \{ \xi_B, B \in \BD_r \}$.
Note that $\Xi$ is of finite range dependence (4-dependent in the language
of  \cite{DZ15}), and by Lemma~\ref{Lemma.fastbox},
$P(\xi_B = 1) = p \ge 1 - 2^{- r k \d C_3 e^{- 6 k \gamma^2} 2^{- 2k}}$,
which converges to $1$ as $r \to \infty$.
For $(\Phi, \Xi)$, similarly to \cite[Theorem~1.5]{DZ15}, we can find positive constants $c$,
$k_0$, $\tilde c_0 = \tilde c_0 (\d)$ and
$r_1 = r_1 (x,y,\gamma, \d, k)$ such that the following holds for
$k \ge k_0$ and $r \ge r_1$. With probability
$\ge 1 - e^{- \tilde c_0 r}$, we can find boxes $B_{i_j}$, $j=1, \cdots, I$ in any discrete path from $x$ to $\partial B(x, \frac 1 4 |x-y|)$ such that $\varphi_r (c_{B_{i_j}}) \ge - (c-1) \d k r \log 2$, $\forall j$, and the following properties hold.
 \begin{enumerate}[(a)]
\item $B_{i_j}$ is slow ({\em i.e.} $\xi_{B_{i_j}} = 1$), $\forall j$.
\item $I \ge s^{-(1-\d)}$.
 \end{enumerate}

Furthermore, by Corollary~\ref{Cor.fluctuation}, with probability at least $1 - e^{-\tilde c_0 r} -e^{-r}$, we have (a), (b) and the following property (c) all hold.
 \begin{enumerate}[(c)]
\item  $\varphi_r (z) \ge - c \d k r \log 2$, $\forall z \in B_{i_j}^{*}$, $\forall j$.
\end{enumerate}

 \begin{remark}
When a discrete path is identified as a sequence of points $v_0, v_1, \cdots$ on $\Z^2$, $v_{i+1}$ may not be a neighbour of $v_i$. However, we have $|v_{i+1} - v_i|_\infty \le 2$ for all $i$. Then, the proof in \cite[Theorem~1.5]{DZ15} automatically extends to the current setup.
 \end{remark}

Set $\s_j = F^x_r (\tau_{i_j+1}) - F^x_r (\tau_{i_j})$ and $\chi_j := {\bf 1}_{\s_j \ge \e_1 s^2}$.
By (a) and Lemma~\ref{Lemma.claim2}, $P^x (\chi_j = 1) \geq C_4 \e_2 \e_3$
for all $j$, which implies that $\E e^{- \chi_j} \le 1 - C_4
\e_2 \e_3 (1 - e^{-1}) \le e^{- C_4 \e_2 \e_3 (1 - e^{-1})}$. Note that the
$\s_j$'s are mutually independent by the strong Markov
property of the $\psi_r$-LBM, and so are the $\chi_j$'s. Therefore,
 \begin{equation} \label{Eq.LDubchi}
P^x \( \sum_{j = 1}^I \chi_\ell  \le \e_1 I \) \le ( e^{\e_1} \E e^{- \chi_j} )^I \le e^{- ( C_4 \e_2 \e_3 (1-e^{-1}) - \e_1) I} \le e^{- \frac 1 2 C_4 \e_2 \e_3 I },
 \end{equation}
where we use that
$\e_1 = 2^{- kr \d} < C_4 \e_2 \e_3 (1 - e^{-1} - \frac 1 2) $ for all
$r$ larger than some $r_2 : =  r_2 (\gamma, \d) > r_1$. By (c), $\chi_j = 1$ implies that
 $$
F^x (\tau_{i_j+1}) - F^x (\tau_{i_j}) \ge  e^{- \gamma c \d k r \log 2 - \frac 1 2 \gamma^2 k r \log 2 } \s_j \ge  2^{- \gamma c \d k r - \frac 1 2 \gamma^2 k r } \e_1 s^2 .
 $$
Thus, $\sum_{j=1}^I \chi_j > \e_1 I$ implies that
 $$
\varsigma >  2^{- \gamma c \d k r  - \frac 1 2 \gamma^2 k r  } \e_1 s^2 \times \e_1 I \ge 2^{- kr( 1 + \frac 1 2 \gamma^2  + 3 \d + c \gamma \d )} .
 $$
This, together with \eqref{Eq.LDubchi} implies that
 $$
P^x ( \varsigma \le  2^{- kr( 1 + \frac 1 2 \gamma^2  + 3 \d + c \gamma \d )} ) \le  \P \( \sum_{j=1}^I \chi_j \le \e_1 I \) \le e^{- \frac 1 2 C_4 \e_2 \e_3 2^{k r(1-\d)}} \le e^{-2^{kr(1-2\d)}} ,
 $$
for all $r$ larger than some $r_3 : = r_3 (\gamma, \d, k) \ge r_2$. By the
Borel-Cantelli Lemma, there exists a random number $r_0 = r_0  (\gamma, k, \d; h)$ such that \eqref{Eq.ubrlarge} holds for all $r \ge r_0$, $\P$-a.s..

For any $t$, define
 $$
r_t : = \lfloor - \frac { \log t  } { \(  1 +  \frac 1 2 \gamma^2 + 3 \d + c \gamma \d \) k  \log 2  } \rfloor .
 $$
equivalently,
 \begin{equation} \label{Eq.ubrelationrt}
2^{k r_t } \le t^{- \frac 1 {1 +  \frac 1 2 \gamma^2 + 3 \d + c \gamma \d}} < 2^{k(r_t +1)} .
 \end{equation}
Note that $t \to 0$ is equivalent to $r_t \to \infty$. Therefore, there exists a random constant $\tilde T_0 = \tilde T_0 (\gamma, k, \d; h)$ such that for any $t \le  \tilde T_0$ (equivalently, $r_t \ge r_0$), \eqref{Eq.ubrlarge} holds for $r = r_t$. This together with \eqref{Eq.ubrelationrt} yields that
 $$
P^x ( \varsigma \le t ) \le \exp \( - (2^{-k} t^{- \frac 1 { 1 +  \frac 1 2 \gamma^2 + 3 \d + 5 \gamma \d } } )^{1 - 2 \d} \) .
 $$
Finally, we pick $\d$ such that  $ \frac {1 - 2 \d} { 1 +  \frac 1 2 \gamma^2 + 3 \d + 5 \gamma \d } > \frac 1 {1 + \frac 1 2 \gamma^2 }   - \frac 1 2 \e $, and then pick $T_0 (\gamma, k, \e; h) \le \tilde T_0$ such that the right hand side above is less than $\exp ( - t^{-  \frac 1  { 1 +  \frac 1 2 \gamma^2 } + \frac 1 2 \e } ) $, completing the proof.
   \end{proof}

\end{document}